\newtheorem{theorem}{Theorem}
\newtheorem*{theorem*}{Theorem}
\newtheorem{proposition}{Proposition}
\newtheorem{lemma}{Lemma}
\newtheorem{corollary}{Corollary}
\theoremstyle{remark}
\newtheorem{remark}{Remark}
\newtheorem{definition}{Definition}
\newtheorem{example}{Example}
\newcommand{\VL}{\mathcal{L}}
\newcommand{\abs}[1]{\left\lvert#1\right\rvert}
\newcommand{\norm}[1]{\left\|#1\right\|}
\newcommand{\R}{\mathbb{R}}
\newcommand{\B}{\mathcal{B}}
\newcommand{\disc}{\mathbb{D}}
\newcommand{\C}{\mathbb{C}}
\newcommand{\Z}{\mathbb{Z}}
\newcommand{\cC}{\mathcal{C}}
\newcommand{\zb}{\overline{z}}
\newcommand{\D}{\Omega}
\newcommand{\Dc}{\overline{\Omega}}
\newcommand{\dbar}{\overline{\partial}}
\providecommand{\bysame}{\leavevmode\hbox to3em{\hrulefill}\thinspace}
\providecommand{\MR}{\relax\ifhmode\unskip\space\fi MR }
\providecommand{\href}[2]{#2}
\title[Obstructions for Compactness of Hankel Operators]{Obstructions for Compactness of Hankel Operators: Compactness Multipliers}
\author{Mehmet \c{C}el\.ik}
\address[Mehmet \c{C}elik]{Texas A\&M University - Commerce, Department of Mathematics, P.O. Box 3011
Commerce, Texas 75429}
\email{mehmet.celik@tamuc.edu}
\author{Yunus E. Zeytuncu}
\address[Yunus E. Zeytuncu]{University of Michigan - Dearborn, Department of Mathematics and Statistics, Dearborn, MI  48128}
\email{zeytuncu@umich.edu}
\subjclass[2000]{Primary 32W05; Secondary 46B35}
\keywords{$\dbar$-Neumann operator, Hankel operator, compactness multipliers}
\date{\today}
\thanks{The work of the second author was partially supported by a grant from 
the Simons Foundation (\#353525). The work of the first
author was partially supported by Texas A\&M University-Commerce Provost Office and by L3-Communications, Greenville.}
\begin{document}
\maketitle

\begin{abstract}
We establish a connection between compactness of Hankel operators and geometry of the underlying domain through compactness multipliers for the $\dbar$-Neumann operator. In particular, we prove that any compactness multiplier induces a compact Hankel operator. We also generalize the notion of compactness multipliers to vector fields and matrices and then we use this generalization to generate compact Hankel operators.
\end{abstract}


\section{Introduction}


In this paper we study obstructions for compactness of Hankel operators on general pseudoconvex domains in $\C^n$. One of the most stimulating results for compactness of Hankel operators is due to Axler \cite{Axler1986}, which states that on the Bergman space of the unit disc the Hankel operator $H_{\overline{f}}$ for a holomorphic function $f$, is compact if and only if $f$ is in the little Bloch space $\B_0$. In particular, if the holomorphic function $f$ is additionally smooth up to the boundary then $f$ is automatically in the little Bloch space and hence the operator $H_{\overline{f}}$ is compact. However, on a general domain in $\C^n$ if we take a holomorphic symbol $f$ that is also smooth up to the boundary, we can not immediately conclude that the corresponding operator $H_{\overline{f}}$ is compact. In other words, in higher dimensions there is no universal characterization of compactness, and the geometry of the domain plays a decisive role. 

As for a more specific example, consider a convex domain in $\C^2$ that contains an analytic disc in its boundary. On these domains, if $g$ is smooth up to the boundary then the Hankel operator $H_g$ is compact if and only if $g$ is holomorphic along the analytic disc in the boundary, see \cite[Corollary 2]{CuckovicSahutoglu09}. This result indicates that one needs to investigate the boundary geometry of the underlying domain to understand compact Hankel operators.

The $L^2$ theory of the $\dbar$-Neumann operator is one of the common ways of relating the boundary geometry and compact Hankel operators, see \cite{Haslinger01}, \cite{CuckovicSahutoglu09}, \cite{Sahutoglu12}, \cite{CelikSahutoglu2012}, \cite{CuckovicSahutoglu13}, \cite{CuckovicSahutoglu14}, and \cite{ZeytuncuSahutoglu16} for some recent results. One reason for this connection is the Kohn's formula that conveniently links Hankel operators and the $\dbar$-Neumann operator. Another tool in this context is the notion of compactness multipliers, which have been studied with the purpose of characterizing  obstructions to compactness of the $\dbar$-Neumann problem \cite{CelikPhD08}. 
Therefore it is natural to relate these multipliers and obstructions for compactness of Hankel operators. As a first observation, on a bounded convex domain a function that is smooth up to the boundary of the domain is a compactness multiplier if and only if it vanishes on the closure of the union of all the (nontrivial) analytic discs in the boundary \cite{CelikStraube09}. On the same domain, a symbol function that is smooth up to the boundary induces a compact Hankel operator if and only if the symbol is holomorphic along analytic discs in the boundary \cite{CuckovicSahutoglu09}. Therefore, on convex domains any compactness multiplier induces a compact Hankel operator. Without the convexity assumption, such a connection between compactness multipliers and symbols of compact Hankel operators was less understood and the following specific question was posed in \cite{CuckovicSahutoglu09}.

\textit{Assume that $\D$ is a smooth bounded pseudoconvex domain in $\C^n$ and $f$ is a compactness multiplier, then is the Hankel operator $H_f$ compact on $\D$}?\\

In the first part of this paper, we establish a connection between compactness of Hankel operators and boundary geometry through compactness multipliers and as a consequence we answer the question above. 
\begin{theorem}\label{thm1}
Let $\D$ be a smooth bounded pseudoconvex domain in $\C^n$ and $f\in C(\Dc)$. If $f$ is a compactness multiplier then the Hankel operator $H_f$ is compact on $A^2(\D)$.
\end{theorem}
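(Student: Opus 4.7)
The plan is to invoke Kohn's formula to represent $H_f$ through the $\dbar$-Neumann operator $N_1$ and then leverage the compactness estimate embedded in the multiplier hypothesis. From $P = I - \dbar^* N_1 \dbar$ together with $\dbar g = 0$ for $g \in A^2(\D)$, one obtains for smooth $f$
\[
H_f g \;=\; (I-P)(fg) \;=\; \dbar^* N_1(g\,\dbar f).
\]
A preliminary smoothing step reduces the proof to $f \in C^\infty(\Dc)$: uniform approximation by smooth $f_j$ gives $\|H_{f_j}-H_f\| \le \|f_j-f\|_\infty$, and since the compact operators are norm-closed, compactness of the $H_{f_j}$ transfers to $H_f$; the compactness multiplier property survives the standard mollification.

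To verify compactness for smooth $f$, I take a bounded sequence $\{g_n\}\subset A^2(\D)$ with $g_n \rightharpoonup 0$ and aim to show $H_f g_n \to 0$ in $L^2(\D)$. Set $u_n := N_1(g_n\,\dbar f)$, so that $H_f g_n = \dbar^* u_n$. Since $g_n\,\dbar f$ is $\dbar$-closed (because $g_n$ is holomorphic), the form $u_n$ is also $\dbar$-closed, and
\[
\|H_f g_n\|^2 \;=\; Q(u_n,u_n) \;=\; \langle g_n\,\dbar f,\, u_n\rangle,
\]
where $Q(u,u) = \|\dbar u\|^2 + \|\dbar^* u\|^2$. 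The family $\{u_n\}$ is bounded in $L^2_{(0,1)}(\D)$ with bounded $Q$-norm and $u_n \rightharpoonup 0$; Rellich's lemma then gives $\|u_n\|_{W^{-1}} \to 0$. Applying the compactness multiplier estimate
\[
\|f u\|^2 \;\le\; \varepsilon\, Q(u,u) \;+\; C_\varepsilon\,\|u\|_{W^{-1}}^2
\]
to $u = u_n$, and then letting $n \to \infty$ followed by $\varepsilon \to 0$, yields $\|f u_n\|_{L^2_{(0,1)}} \to 0$.

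The final, most delicate, step is to upgrade this control on $f u_n$ into $\|\dbar^* u_n\| = \|H_f g_n\| \to 0$. The plan is to integrate by parts in the identity $\|\dbar^* u_n\|^2 = \langle g_n\,\dbar f,\, u_n\rangle$, exploiting the $\dbar^*$-boundary condition on $u_n$ and the holomorphicity of $g_n$ to transfer the $\dbar$-derivative of $f$ onto $u_n$ in such a way that $f u_n$ enters as a factor paired against a uniformly bounded sequence. The main obstacle is that the naive integration by parts, via $g_n\,\dbar f = \dbar(fg_n)$, only reproduces $\|\dbar^* u_n\|^2 = \langle f g_n,\, \dbar^* u_n\rangle$ and hence the trivial bound $\|H_f g_n\| \le \|f g_n\|$, which does not suffice because $f g_n$ need not converge to zero in $L^2$ for a weakly null sequence $\{g_n\}$. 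A more refined manipulation — potentially employing the paper's generalization of compactness multipliers to vector fields, so that $\dbar f$ itself can be treated as a vector-field compactness multiplier controlling the pairing $\sum_j f_{\bar z_j}\overline{(u_n)_j}$ directly — appears necessary to extract the factor of $f u_n$ and close the loop. Once this conversion is in hand, $\|H_f g_n\| \to 0$ follows, giving compactness of $H_f$ on $A^2(\D)$.
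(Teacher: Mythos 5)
Your route (Kohn's formula plus a weakly-null-sequence argument) is different from the paper's, which instead localizes: it uses \cite[Proposition 1 (ii)]{CuckovicSahutoglu09}, compactness of the local $\dbar$-Neumann operator away from the common zero set $K$ of the multiplier ideal, cutoffs $\chi_j$ vanishing near $K$ to build multipliers $f_j=\chi_j f$ with $H_{f_j}$ compact, and then $\norm{H_f-H_{f_j}}\le\norm{f-f_j}_\infty\to 0$. Your approach could in principle work, but as written it has a genuine gap at exactly the step you flag: you never convert the information you extract into $\norm{\dbar^*u_n}\to 0$. In fact the quantity you estimate, $\norm{fu_n}$, is not the one that closes the argument; what is needed is control of the interior product $M_{\partial f}(u_n)=\sum_j f_{z_j}(u_n)_j$. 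Concretely, reducing to real-valued $f$ (Remark \ref{real c.m.}), one has $\langle g_n\,\dbar f, u_n\rangle=\int_\D g_n\,\overline{M_{\partial f}(u_n)}$, so $\norm{\dbar^*u_n}^2\le\norm{g_n}\,\norm{M_{\partial f}(u_n)}$, and it is Proposition \ref{p2} (the allowable-vector-field statement, requiring $f\in C^2(\Dc)$) applied to $u_n$, together with $Q(u_n,u_n)$ bounded and $\norm{u_n}_{-1}\to 0$, that gives $\norm{M_{\partial f}(u_n)}\to 0$. You name this tool but do not carry out the step, so the core of the proof is missing rather than merely sketched.

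The preliminary smoothing step is also not justified as stated: mollification does \emph{not} automatically preserve the compactness multiplier property. If $f_\delta$ is a mollification, the error estimate $\norm{f_\delta u}\le\norm{f_\delta-f}_\infty\norm{u}+\norm{fu}$ only yields, via the basic estimate $\norm{u}^2\lesssim\norm{\dbar u}^2+\norm{\dbar^*u}^2$, a \emph{fixed} (not arbitrarily small) multiple of the energy for each fixed $\delta$; and in terms of the characterization $J=\{f\in C(\Dc):\ f|_K\equiv 0\}$, the mollified function need not vanish on $K$. To produce smooth compactness multipliers converging uniformly to $f$ you essentially must use this structure of $J$ --- e.g.\ multiply smooth uniform approximants of $f$ by smooth cutoffs vanishing on a neighborhood of $K$, using $f|_K\equiv 0$ and continuity to control the uniform error --- which is precisely the cutoff mechanism of the paper's own proof. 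So both missing ingredients (the vector-field estimate of Proposition \ref{p2} and the $K$-based approximation) must be supplied before your argument is complete.
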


In the second part, we generalize the compactness multipliers tool to vector fields and matrices and then by using the generalized compactness multiplier device, we show how to generate symbols that induce compact Hankel operators.

\begin{theorem}\label{det-compute}
Let $\D$ be a smooth bounded pseudoconvex domain in $\mathbb{C}^n$ that admits a smooth plurisubharmonic defining
function $r$. Let
\begin{align}\label{Levi form}
\VL:=\left[\frac{\partial^{2}r}{\partial z_i \partial \zb_{j}}\right]_{1\leq i,j\leq n}
=\bordermatrix{\cr             
\cr           & \frac{\partial^{2} r}{\partial z_{1}\partial\zb_{1}}  & \frac{\partial^{2} r}{\partial z_{1}\partial\zb_{2}}  & ... &\frac{\partial^{2} r}{\partial z_{1}\partial\zb_{n}}  
\cr           & \vdots & \vdots  & &\vdots
\cr          & \vdots  & \vdots & &\vdots
\cr         & \frac{\partial^{2} r}{\partial z_{n-1}\partial\zb_{1}}  & \frac{\partial^{2} r}{\partial z_{n-1}\partial\zb_{2}}  &  ... &\frac{\partial^{2} r}{\partial z_{n-1}\partial\zb_{n}}
\cr          & \frac{\partial^{2} r}{\partial z_{n}\partial\zb_{1}}  & \frac{\partial^{2} r}{\partial z_{n}\partial\zb_{2}}  & ... &\frac{\partial^{2} r}{\partial z_{n}\partial\zb_{n}} \cr}
\end{align}
be the complex Hessian matrix. Then the Hankel operator $H_{\det(\VL)}$ is compact on $A^2(\D)$.
\end{theorem}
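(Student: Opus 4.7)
The plan is to apply Theorem \ref{thm1}: it suffices to show that $f := \det(\VL)$ is a compactness multiplier on $\D$. Plurisubharmonicity of $r$ on $\Dc$ is what makes this possible, since it forces the Hermitian Levi matrix $\VL$ to be pointwise positive semi-definite everywhere on $\Dc$.

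Let $0 \le \lambda_1(z) \le \cdots \le \lambda_n(z)$ denote the eigenvalues of $\VL(z)$. Boundedness of $\VL$ on the compact set $\Dc$ gives the pointwise chain
\[
|f(z)|^2 \;=\; \Bigl(\prod_{j=1}^n \lambda_j(z)\Bigr)^{\!2} \;\le\; C\, \lambda_1(z),
\]
so the scalar symbol $|f|^2$ is controlled by the smallest eigenvalue of $\VL$. The Morrey--Kohn--H\"ormander basic estimate for $(0,1)$-forms, together with the global plurisubharmonicity of $r$, yields
\[
\int_{\D} \lambda_1 \, |\phi|^2 \, dV \;\le\; \int_{\D} \langle \VL \phi, \phi\rangle \, dV \;\le\; C \bigl(\|\dbar \phi\|^2 + \|\dbar^* \phi\|^2\bigr)
\]
for every $(0,1)$-form $\phi$ in the $L^2$-domain of $\dbar^*$. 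Combining the two displays produces the basic multiplier bound $\|f \phi\|^2 \le C\, Q(\phi, \phi)$, where $Q$ denotes the $\dbar$-Neumann energy form.

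To upgrade this into the compactness multiplier inequality $\|f\phi\|^2 \le \epsilon\, Q(\phi, \phi) + C_\epsilon \|\phi\|_{-1}^2$ for arbitrary $\epsilon > 0$, I would introduce a smooth cut-off $\chi_\epsilon$ equal to $1$ on $\{\det(\VL) \ge \epsilon\}$ and vanishing near the zero set of $\det(\VL)$, and split $f = (1-\chi_\epsilon) f + \chi_\epsilon f$. The first piece is pointwise bounded by $\epsilon$, so the multiplier bound above supplies the $\epsilon\, Q$ term; the second piece is smooth and supported away from the weakly pseudoconvex boundary locus, so interior regularity of the $\dbar$-Neumann operator, combined with the compact embedding $L^2 \hookrightarrow H^{-1}$, yields the required $C_\epsilon \|\phi\|_{-1}^2$ bound.

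The principal technical difficulty I anticipate lies in this final cut-off step: one must separate the region $\{\det(\VL) \ge \epsilon\}$ cleanly from the weakly pseudoconvex boundary while keeping the $C_\epsilon$ constants finite, and organize the splitting so that the argument closes uniformly in $\epsilon$. Once this is handled, $\det(\VL)$ qualifies as a compactness multiplier and Theorem \ref{thm1} produces the desired compactness of $H_{\det(\VL)}$ on $A^2(\D)$.
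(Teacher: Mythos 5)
Your overall strategy coincides with the paper's: reduce to showing $\det(\VL)$ is a compactness multiplier and invoke Theorem \ref{thm1}, using the pointwise linear-algebra bound $|\det(\VL)|^2\,|\xi|^2\le C\,(\VL\xi,\xi)$ that follows from positive semidefiniteness and boundedness of $\VL$ on $\Dc$. Where you diverge is in how the arbitrarily small $\varepsilon$ is produced, and that is where your argument has a genuine gap. The Catlin-type inequality $\int_\D\langle\VL\phi,\phi\rangle\,dV\le C\,Q(\phi,\phi)$ you quote is correct (it follows from the weighted Morrey--Kohn--H\"ormander identity with the bounded plurisubharmonic weight $r$), but it only yields $\norm{\det(\VL)\,\phi}^2\le C\,Q(\phi,\phi)$ with a \emph{fixed} constant, which is not the compactness multiplier estimate. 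Your proposed upgrade splits $f=(1-\chi_\varepsilon)f+\chi_\varepsilon f$; the first piece is fine (on its support $|f|<\varepsilon$, so $|(1-\chi_\varepsilon)f|^2\le\varepsilon|f|\lesssim\varepsilon\langle\VL\cdot,\cdot\rangle$ pointwise and the Catlin inequality gives $\varepsilon\cdot C\,Q$). But the claim that the second piece is handled by \emph{interior} elliptic regularity is false: the support of $\chi_\varepsilon f$ contains the boundary set $\{\det(\VL)\ge\varepsilon\}\cap b\D$, which is nonempty in general (on a strictly convex domain with $r$ comparable to $|z|^2-1$ one has $\det(\VL)\equiv 1$, so $\chi_\varepsilon f=f$ on all of $\Dc$ and interior regularity controls nothing). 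Interior regularity only gives $\norm{\psi\phi}\lesssim C_\psi\norm{\phi}_{-1}$ for $\psi$ compactly supported \emph{in} $\D$.

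The gap is repairable but requires a different mechanism at those boundary points: since $\VL\ge 0$ and $\det(\VL)>0$ together force $\VL>0$, every boundary point in $\{\det(\VL)\ge\varepsilon\}$ is strongly pseudoconvex, where Kohn's subelliptic $1/2$-estimate holds; a localization/patching argument (as in Case~1 of the proof of Theorem \ref{thm1}) then shows $\chi_\varepsilon f$ is itself a compactness multiplier, supplying the $C_\varepsilon\norm{\phi}_{-1}^2$ term. You would need to write out that patching, and note that $C_\varepsilon$ may blow up as $\varepsilon\to 0$, which is permitted. The paper avoids this issue entirely: following Straube, it proves the stronger estimate $\int_\D\langle\VL u,u\rangle\,dV\le\varepsilon\,Q(u,u)+C_\varepsilon\norm{u}_{-1}^2$ for every $\varepsilon>0$ directly, by splitting $u$ into tangential and normal parts near $b\D$, absorbing the normal part via its Sobolev-$1$ subelliptic estimate plus interpolation, and obtaining the $\varepsilon$ for the tangential part by integrating the Kohn--Morrey boundary term over a collar of width $\varepsilon$ via Fubini's theorem. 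That collar argument is the key idea your proposal is missing; with it, no case analysis on the size of $\det(\VL)$ is needed.
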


Finally, by following the ideas from \cite{Straube2008} we obtain a more general theorem that shows how to generate many more compact Hankel operators.

\begin{theorem}\label{general allowable}
Let $\D$ be as in Theorem \ref{det-compute}. If $A(z)$ is a positive semidefinite self-conjugate matrix (of entries continuous functions on $\Dc$) such that
for all $z\in \Dc$ and $\xi\in\C^{n}$
\begin{align}
0\leq \left(A^{m}(z)\cdot\xi\ ,\ \xi\right)\leq \left(\VL(z)\cdot\xi\ ,\ \xi\right)\ \ \text{for some}\ m\in \Z^{+}
\end{align}
then the Hankel operator $H_{\phi}$ is compact on $A^2(\D)$, where $\phi(z)=\det(A(z))$.
\end{theorem}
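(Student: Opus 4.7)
The plan is to read Theorem \ref{general allowable} as a direct generalization of Theorem \ref{det-compute}, with the Levi form $\mathcal{L}$ replaced by the matrix $A$. The hypothesis $0\leq (A^{m}(z)\xi,\xi)\leq (\mathcal{L}(z)\xi,\xi)$ is precisely what is needed so that $A$ can play the role of $\mathcal{L}$ inside the matrix-valued compactness multiplier framework that was developed (and used to prove Theorem \ref{det-compute}) in the preceding section. The endgame is to show that $\phi=\det(A)$ is a scalar compactness multiplier on $\Omega$ and then invoke Theorem \ref{thm1}.

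First, I would set up monotonicity in the matrix framework: if $M$ and $N$ are positive semidefinite self-conjugate continuous matrix-valued functions on $\Dc$ with $M\leq N$ pointwise in the sense of Hermitian forms, and $N$ is a compactness multiplier matrix for the $\dbar$-Neumann operator (which is exactly the property of $\mathcal{L}$ that drives Theorem \ref{det-compute}), then so is $M$. Applied to $M=A^{m}$ and $N=\mathcal{L}$, this gives that $A^{m}$ is a compactness multiplier matrix. Next, I would run the determinant computation from Theorem \ref{det-compute} with $A^{m}$ in place of $\mathcal{L}$; this yields that the scalar function $\det(A^{m})=(\det A)^{m}$ is a compactness multiplier in $C(\Dc)$.

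The last step is to pass from $(\det A)^{m}$ to $\det A$. Since $A(z)$ is positive semidefinite, we have $\det(A(z))\geq 0$ on $\Dc$, and by Löwner's theorem the pointwise inequality $A^{m}\leq \mathcal{L}$ upgrades to $A\leq \mathcal{L}^{1/m}$, so $\det A$ is controlled by a power of $\det(\mathcal{L})$. I would then use that the set of nonnegative continuous compactness multipliers is closed under taking positive roots (and under domination $0\leq f\leq g$) to conclude that $\det(A)$ itself is a compactness multiplier. One application of Theorem \ref{thm1} then delivers compactness of $H_{\det(A)}$ on $A^{2}(\Omega)$.

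The main obstacle I anticipate is the monotonicity step in the matrix framework: showing rigorously that the compactness-multiplier-matrix property descends under the PSD ordering, uniformly in the direction $\xi\in\C^{n}$. This is where the flexibility of the generalized definition introduced earlier in the paper must be exploited, and it is also what makes the ``for some $m\in\Z^{+}$'' clause both natural and usable — the $m$-th power absorbs the gap between a generic allowable $A$ and the actual Levi form, after which Löwner monotonicity and the determinant identity take over.
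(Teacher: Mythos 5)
Your proposal is correct, but it diverges from the paper's argument at the decisive step, namely passing from the hypothesis on $A^{m}$ to a conclusion about $A$ itself. The paper stays at the operator level: it first shows (via Lemma \ref{lemma1}, whose proof uses the positive semidefinite square root $B^{1/2}$ and the Straube estimate for $\int_{\D}(\VL u,u)_{E}$ from the proof of Theorem \ref{det-compute}) that $A^{m}$ is an allowable matrix, then proves an interpolation inequality bounding $\norm{Au}^{2}$ in terms of $\norm{A^{m}u}^{2}$ and $\norm{u}^{2}$ by iterating the small-constant--large-constant inequality together with $\norm{A^{(r+l)/2}u}^{2}\leq\norm{A^{r}u}\,\norm{A^{l}u}$, concludes that $A$ itself is allowable, and only then applies Proposition \ref{determinant} and Theorem \ref{thm1}. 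You instead take determinants first: from $0\le(A^{m}(z)\xi,\xi)\le(\VL(z)\xi,\xi)$ and the same Straube estimate you get that $\det(A^{m})=(\det A)^{m}$ is a compactness multiplier, and you then extract the $m$-th root at the scalar level. That root-extraction step, which you state as a fact rather than prove, is legitimate and is in fact immediate from the characterization $J=\{f\in C(\Dc)\ |\ f|_{K}\equiv 0\}$ already quoted in Section 3 (a nonnegative continuous $f$ with $f^{m}\in J$ vanishes exactly where $f^{m}$ does); alternatively it follows from the pointwise inequality $f^{2/m}\le\delta+C_{\delta}f^{2}$ combined with the basic estimate $\norm{u}^{2}\lesssim\norm{\dbar u}^{2}+\norm{\dbar^{*}u}^{2}$. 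Your route is arguably cleaner: it bypasses the operator interpolation entirely and never needs $A$ itself to be allowable. Two minor remarks: the ``monotonicity'' step you flag as the main obstacle is actually the easy part, since $\norm{A^{m}u}^{2}=\int_{\D}(A^{2m}u,u)_{E}\le \sup_{\Dc}\|A^{m}\|\int_{\D}(A^{m}u,u)_{E}\le C\int_{\D}(\VL u,u)_{E}$; and the appeal to L\"owner's theorem (giving $A\le\VL^{1/m}$ and hence $\det A\le(\det\VL)^{1/m}$) is correct but redundant, since the statement that $(\det A)^{m}$ is a multiplier already suffices once roots can be taken.
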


The remainder of the paper is organized as follows. In the second section, we recall the $L^2$ theory
of the $\dbar$-Neumann operator, we introduce compactness multipliers, allowable vector fields, allowable matrices, and the connection between the $\dbar$-Neumann operator and Hankel operators. In the same section, we relate the compactness multiplier notion to that of a symbol of a compact Hankel operator.
In Section $3$, we prove Theorem $1$, present a counterexample for its converse, and remark on the extensions to higher degree forms.
In Section $4$, we prove Theorems $2$ and $3$. Finally, in the last section we conclude with some examples and remarks.


\section{Definitions and Notations}
Let $\Omega\subset\Bbb{C}^n$ be a bounded, smooth pseudoconvex domain and let $\square=\dbar^{*}\overline{\partial}+\overline{\partial} \dbar^{*}$ be the $\overline{\partial}$-Neumann Laplacian, where $\overline{\partial}{}^{*}$ stands for the $L^{2}$-adjoint of the Cauchy-Riemann operator. Under these assumptions on $\Omega$, the operator $\overline{\partial}\colon L^2_{(0,q)}(\Omega)\rightarrow L^2_{(0,q+1)}(\Omega)$ is closed and densely defined. Furthermore, the operator $\square$ acting on its domain is invertible with a bounded inverse $N$, which is called the $\overline{\partial}$-Neumann operator, \cite{Hormander65}, see also \cite{ChenShawBook} and \cite{StraubeBook}. Compactness of the $\overline\partial$-Neumann problem is a basic property with many applications. When $b\D$ is smooth, compactness  implies global regularity of the $\dbar$-Neumann problem. Also, the Fredholm theory for Toeplitz operators is a direct consequence of the compactness of the $\dbar$-Neumann problem. The following estimate is a reformulation of the compactness property \cite[Chapter 4]{StraubeBook}.

A compactness estimate of the $\overline{\partial}$-Neumann operator is said to hold on $\Omega$ if for a given  $\varepsilon>0$ there is a constant $C_{\varepsilon}>0$ such that the following estimate 
\begin{eqnarray*}\label{eq1}
\norm{u}^{2}\leq \varepsilon\left(\norm{\overline{\partial}u}^{2}+\norm{\overline{\partial}^{*}u}^{2}\right)+C_{\varepsilon}\norm{u}_{-1}^{2}
\end{eqnarray*}
is valid $\forall u\in \mbox{Dom}(\overline{\partial})\cap\mbox{Dom}(\overline{\partial}^{*})\subset L_{(0,q)}^{2}(\Omega)$. ($\norm{\cdot}_{-1}$ is the $L^{2}$-Sobolev ($-1$)-norm.)

One way to investigate compactness of the $\dbar$-Neumann problem through the compactness estimate is to consider the set of functions $f$ for which one can estimate the $L^2$-norm of $fu$ in terms of $L^2$-norms of $\dbar u$, $\dbar^{*}u$, and the Sobolev $(-1)$-norm of $u$; the constant $C_{\varepsilon}$ is allowed to depend on $f$. Such functions are known as compactness multipliers \cite{CelikStraube09}. Compactness multipliers are inspired by the well-known subelliptic multipliers \cite{Kohn79}, with compactness estimates taking the place of the subelliptic estimates.

\begin{definition}
A function $f\in \cC(\overline{\Omega})$ is called a \textit{compactness multiplier} on $\Omega$
if for every  $\varepsilon>0$ there is a constant $C_{\varepsilon,f}>0$ such that the following estimate
\begin{eqnarray}\label{eq:no1a}
\norm{fu}^{2} \leq \varepsilon ( \norm{\overline{\partial}u}^{2}+\norm{\overline{\partial}^{*}u}^{2})+ C_{\varepsilon,f}\norm{u}_{-1}^{2}
\end{eqnarray}
is valid $\forall u\in \mbox{Dom}(\overline{\partial})\cap \mbox{Dom}(\overline{\partial}^{*})\subset L_{(0,q)}^{2}(\Omega)$.
\end{definition}
\begin{remark}\label{real c.m.}
$f$ is a compactness multiplier if and only if $\overline{f}$ is a compactness multiplier. Then, the real and imaginary parts of $f$ are compactness multipliers. As a result, it is sufficient to consider real valued compactness multipliers. 
\end{remark}

Compactness multipliers have been studied with the purpose of characterizing the obstructions to compactness of the $\dbar$-Neumann problem. Let $J^q$ be the set of compactness multipliers associated with $(0,q)$-forms, $1\le q\le n$, and denote by $A_q$ the common zero set of the elements in $J^q$, i.e. $J^q=\{f\in C(\overline{\Omega})|\ f\equiv 0 \text{ on } A_q\}$. Then the $\dbar$-Neumann operator is compact if and only if $A_q$ is empty \cite{CelikStraube09}. In the same paper, it was also showed that on bounded convex domains, the set $A_q$ is exactly the closure of the union of $q$-dimensional analytic disks in the boundary.\\

In our work, we will also use the derivatives of the compactness multipliers. For a real valued function, we set $\sum_{j=1}^{n}\frac{\partial f}{\partial z_j}u_j$ to be the interior product (the adjoint of exterior multiplication) of the vector field $\overline{\partial f}=\sum_{j=1}^n\overline{\left(\frac{\partial f}{\partial z_j}\right)}\frac{\partial}{\partial \zb_j}$ and the $(0,1)$-form $u=\sum_{j=1}^{n}u_j d\zb_j$. One can estimate the $L^2$-norm of $\sum_{j=1}^{n}\frac{\partial f}{\partial z_j}u_j$ in terms of $L^2$-norms of $\dbar u$, $\dbar^{*} u$, and the Sobolev $(-1)$-norm of $u$; again constant $C_{\varepsilon}$ is allowed to depend on $\partial f$.

We will also employ the term ``\emph{allowable}'' which was used in the study of subelliptic multipliers by D'Angelo in \cite{D'AngeloBook1992}.
\begin{definition}
A vector field
\[
v=\sum_{j=1}^{n}v_{j}\frac{\partial}{\partial z_{j}}
\]
of type $(1,0)$  
is \textit{allowable} (for compactness estimate of the 
$\dbar$-Neumann problem) if for every $\varepsilon>0$ there is $C_{\varepsilon}>0$ such that 
\begin{align}\label{AllowableVector}
\left\|\sum_{j=1}^{n}v_{j}u_{j}\right\|^{2} &\leq \varepsilon ( \norm{\overline{\partial}u}^{2}+\norm{\overline{\partial}^{*}u}^{2})+ C_{\varepsilon}\norm{u}_{-1}^{2}
\end{align}
for all $u=\sum_{j=1}^{n}u_{j}d\zb_{j}\in \mbox{Dom}(\overline{\partial})\cap \mbox{Dom}(\overline{\partial}^{*})\subset L_{(0,1)}^{2}(\Omega)$.\\ 
We will also need the following notation. 
\[M_{v}:\mbox{Dom}(\overline{\partial})\cap \mbox{Dom}(\overline{\partial}^{*})\rightarrow L_{(0,0)}^{2}(\Omega)\] 
\[M_{v}(u):=\sum_{j=1}^{n} v_j u_{j},\ \ \text{ where }\ \ u=\sum_{j=1}^{n}u_j d\zb_j.\]
Thus, the estimate \eqref{AllowableVector} can be rewritten as
\begin{align}
\left\|M_{v}(u)\right\|^{2}&\leq \varepsilon ( \norm{\overline{\partial}u}^{2}+\norm{\overline{\partial}^{*}u}^{2})+ C_{\varepsilon}\norm{u}_{-1}^{2}
\end{align}
for all $u=\sum_{j=1}^{n}u_{j}d\zb_{j}\in \mbox{Dom}(\overline{\partial})\cap \mbox{Dom}(\overline{\partial}^{*})\subset L_{(0,1)}^{2}(\Omega)$. 
\end{definition}
In the next definition we generalize the definition of an \emph{allowable vector field} to an \emph{allowable matrix}.

\begin{definition}\label{allowable matrix}
An $n\times n$ matrix $A(z)$  with smooth entries on $\C^n$ is called an \textit{allowable matrix} if for each $\varepsilon >0$ there is a constant $C_{\varepsilon}>0$ such that.
\begin{align}\label{matrix6}
\norm{A(z)u}^{2}\leq \varepsilon ( \norm{\overline{\partial}u}^{2}+\norm{\overline{\partial}^{*}u}^{2})+ C_{\varepsilon}\norm{u}_{-1}^{2}
\end{align}
for all $u=\sum_{k=1}^{n}u_{k}d\zb_{k}\in \mbox{Dom}(\overline{\partial})\cap \mbox{Dom}(\overline{\partial}^{*})\subset L_{(0,1)}^{2}(\Omega)$
\end{definition}

$\norm{A(z)u}^2$ in \eqref{matrix6} represents $\sum_{j=1}^n\left\|\sum_{k=1}^n A_{jk}(z)u_k\right\|^{2}$.
Note that this definition is equivalent to saying that each row of the matrix is an \emph{allowable row}. The definition allows the replacement of a row's entries with components of an allowable vector field.\\

Let $A^2(\Omega)$ be the subspace of holomorphic functions in $L^2(\Omega)$. The operator $P\colon L^2(\Omega) \longrightarrow A^2(\Omega)$ denotes the Bergman projection. The Hankel operator with symbol $\psi \in L^\infty(\Omega)$ is the operator defined as $$ H_\psi = (I - P)\psi \colon A^2(\Omega) \longrightarrow L^2(\Omega), $$ where the symbol $\psi$ is identified with the corresponding multiplication operator. 
   Using Kohn's formula $P = I - \overline{\partial}{}^{*} N_1 \overline{\partial}$, the following relation between the $\overline{\partial}$-Neumann operator $N$ and the Hankel operator $H_{\psi}$ with symbol $\psi \in C^1(\Omega)$ is obtained $ H_\psi (f) = \overline{\partial}{}^{*} N_1 \overline{\partial} (\psi f) = \overline{\partial}{}^{*} N_1 (f \overline{\partial}\psi ),\quad \forall f \in A^2(\Omega). $
   Because of this formula it is natural to expect strong connections between $N_1$ and $H_{\psi}$.

\section{Proof of Theorem 1}
Let $H_{\phi}^{\D}$ denote the Hankel operator on $\D$ with symbol $\phi$ and $R_U$ be the restriction operator onto an open set $U$. One can still use functions from $A^2(\D)$ and work locally, on a neighborhood $\D\cap U$ of $p\in b\D$ by using the composition of the Hankel and restriction operators, $H_{R_{\D\cap U}(\phi)}^{\D\cap U}R_{\D\cap U}$. The composition of those two operators is well-defined on $A^2(\D)$.

On bounded pseudoconvex domains the set of compactness multipliers in $C(\Dc)$ is a closed ideal $J$ and we can express $J=\{f\in C(\Dc)\ |\ f|_{K}\equiv 0\}$, where $K$ denotes the common zero set of the elements in $J$. Moreover, $K$ is a subset of the set of infinite type points on $b\D$, $(b\D)_{\infty}$, and it can easily be shown that the set $(b\D)_{\infty}\backslash K$ is benign for the compactness of the $\dbar$-Neumann operator \cite{CelikPhD08}.

\begingroup
\def\thetheorem{\ref{thm1}}
\begin{theorem}
Let $\D$ be a smooth bounded pseudoconvex domain in $\C^n$ and $f\in C(\Dc)$. If $f$ is a compactness multiplier then the Hankel operator $H_f$ is compact on $A^2(\D)$.
\end{theorem}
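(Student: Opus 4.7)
The plan is to first establish the theorem under the additional assumption that $f$ is smooth, and then remove this assumption by a smoothing argument. By Remark~\ref{real c.m.} it suffices to treat $f$ real-valued throughout.

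Assume first $f\in C^\infty(\Dc)$ is a real-valued compactness multiplier. I would show $\|H_f g_k\|\to 0$ for every weakly null sequence $\{g_k\}\subset A^2(\D)$ with $\|g_k\|\leq M$. Kohn's formula gives $H_f g_k=\dbar^{*}v_k$, where $v_k:=N_1(g_k\,\dbar f)$; since $g_k\,\dbar f$ is $\dbar$-closed ($g_k$ holomorphic, $f$ smooth) and $H^{0,2}(\D)=0$ on a bounded pseudoconvex domain, $\dbar v_k=0$, so
\[
\|H_f g_k\|^{2}=\|\dbar^{*}v_k\|^{2}=\langle v_k,\dbar\dbar^{*}v_k\rangle=\langle v_k,\,g_k\,\dbar f\rangle.
\]
A direct calculation using the reality of $f$ rewrites this pairing as $\bigl\langle\sum_{j}\tfrac{\partial f}{\partial z_{j}}v_{k,j},\,g_k\bigr\rangle_{L^{2}(\D)}$. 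Since $f$ is a multiplier, the derivative vector field $\partial f$ is allowable (the estimate for $\sum \tfrac{\partial f}{\partial z_{j}}u_{j}$ recorded after Definition~1), giving
\[
\Bigl\|\sum_{j=1}^{n}\tfrac{\partial f}{\partial z_{j}}v_{k,j}\Bigr\|^{2}\leq \varepsilon\bigl(\|\dbar v_k\|^{2}+\|\dbar^{*}v_k\|^{2}\bigr)+C_{\varepsilon}\|v_k\|_{-1}^{2}=\varepsilon\|H_f g_k\|^{2}+C_{\varepsilon}\|v_k\|_{-1}^{2}.
\]
Combining via Cauchy--Schwarz and squaring yields the quadratic estimate
\[
\|H_f g_k\|^{4}\leq M^{2}\varepsilon\|H_f g_k\|^{2}+M^{2}C_{\varepsilon}\|v_k\|_{-1}^{2}.
\]
Because $g_k\,\dbar f\rightharpoonup 0$ in $L^{2}_{(0,1)}(\D)$, boundedness of $N_1$ gives $v_k\rightharpoonup 0$, and the compact Rellich embedding $L^{2}(\D)\hookrightarrow H^{-1}(\D)$ produces $\|v_k\|_{-1}\to 0$. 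Solving the quadratic in $\|H_f g_k\|^{2}$ and sending first $k\to\infty$ and then $\varepsilon\to 0$ yields $\|H_f g_k\|\to 0$, proving compactness of $H_f$.

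For a general continuous multiplier $f$, I would approximate $f$ uniformly by smooth real-valued multipliers $h_n$; the bound $\|H_f-H_{h_n}\|_{\mathrm{op}}\leq\|f-h_n\|_{\infty}$ then transfers compactness of each $H_{h_n}$ (from the smooth case) to $H_f$. The construction uses the characterization $J=\{f\in C(\Dc):f|_{K}\equiv 0\}$ recorded in the paper: extend $f$ continuously to $\C^{n}$ by Tietze, mollify to obtain smooth $f_m\to f$ uniformly, and multiply by smooth cutoffs $\eta_n$ vanishing on a shrinking neighborhood $U_n$ of $K$ and equal to one off a slightly larger one. Then $h_{n,m}:=\eta_n f_m\in C^{\infty}(\Dc)$ satisfies $h_{n,m}|_{K}\equiv 0$, and
\[
\|h_{n,m}-f\|_{\infty}\leq \|f_m-f\|_{\infty}+\sup_{U_n\cap\Dc}|f|;
\]
both terms vanish as $m,n\to\infty$ (the second by continuity of $f$ together with $f|_{K}=0$), so a diagonal sequence $h_n:=h_{n,m(n)}$ is a smooth element of $J$ converging uniformly to $f$.

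The step I expect to be the main obstacle is the smoothing: the approximants $h_n$ must remain inside the ideal $J$, but a naive mollification of $f$ need not vanish on $K$, so the cutoff-then-mollify construction, powered by the characterization $J=\{f:f|_{K}\equiv 0\}$, is essential. A secondary delicate point is the algebraic manipulation of the quadratic inequality, where $\|H_f g_k\|$ appears on both sides; direct absorption fails, but treating the estimate as a quadratic in $\|H_f g_k\|^{2}$ and exploiting $\|v_k\|_{-1}\to 0$ converts the apparently circular bound into a $\limsup$ that vanishes as $\varepsilon\to 0$.
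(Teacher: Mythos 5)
Your proof is correct, but it takes a genuinely different route from the paper's. The paper localizes: it covers $b\D$ by neighborhoods, shows the localized operator $H^{\D\cap U_p}_{R(f)}R_{\D\cap U_p}$ is compact near each boundary point --- using compactness of the local $\dbar$-Neumann operator away from the common zero set $K$ of the multiplier ideal for points off $K$, and cutting the symbol off so that the localized operator is identically zero near $K$ --- then patches via the localization proposition of \cite{CuckovicSahutoglu09} and passes to a uniform limit. You instead give a direct global argument: Kohn's formula reduces $\norm{H_fg_k}^2$ to the pairing $\bigl\langle\sum_j \tfrac{\partial f}{\partial z_j}v_{k,j},\,g_k\bigr\rangle$, and the allowability of $\partial f$ --- which is exactly Proposition \ref{p2} of the paper, proved there independently of Theorem \ref{thm1}, so there is no circularity even though it appears in a later section --- converts this into a compactness estimate that is closed up by the Rellich embedding and a quadratic absorption. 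Your smooth case in fact establishes something formally stronger: $H_f$ is compact whenever the real-valued symbol has $\partial f$ allowable, a condition a priori weaker than $f$ being a multiplier (consistent with Remark \ref{vf remark}, since the non-multiplier potentials arising there are holomorphic and give vanishing Hankel operators). Both arguments ultimately rest on the characterization $J=\{h\in C(\Dc)\,:\,h|_{K}\equiv 0\}$ from \cite{CelikStraube09} and finish with the same operator-norm bound $\norm{H_f-H_{h_n}}\leq\norm{f-h_n}_{\infty}$; the paper uses it to cut $f$ off near $K$, while you use it to smooth $f$ while remaining in the ideal. What the paper's route buys is that it never differentiates the symbol and needs no analogue of Proposition \ref{p2}; what yours buys is independence from the localization machinery and from compactness of the local $\dbar$-Neumann problem off $K$, at the cost of the cutoff-and-mollify reduction from $C(\Dc)$ to $C^{\infty}(\Dc)$, which you correctly identify as the delicate step.
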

\addtocounter{theorem}{-1}
\endgroup
\begin{proof}
We start with the localization of Hankel operators technique introduced in \cite[Proposition $1$, $(ii)$]{CuckovicSahutoglu09}. First, we show that for every $p\in b\D$ there is an open neighborhood $U_p$ such that $\D\cap U_p$ is a domain, and $H_{R_{\D\cap U_p}(f)}^{\D\cap U_p}R_{\D\cap U_p}$ is compact on $A^2(\D)$, then by using \cite[Proposition $1$, $(ii)$]{CuckovicSahutoglu09}, we conclude that $H_{f}^{\D}$ is compact on $A^2(\D)$.
For this purpose, we look at the points $p\in b\D$ in two separate cases.\\

\emph{Case 1:} If $p\in b\D \backslash K$, then there is a complex ball $B(p,r)$ centered at $p$ with radius $r>0$ such that $B(p,r)\cap K=\emptyset$. Define 
$$U_p:=B(p,r)\cap\D.$$ 
The $\dbar$-Neumann operator is compact on $L^2(U_p)$. The set of infinite type points, not in $K$, is benign for the compactness of the $\dbar$-Newmann operator \cite{CelikPhD08} and the rest of the boundary points are of finite type. 

If the $\dbar$-Neumann operator is compact on $L^2(U_p)$ then $R_{U_p}(f)\in C(\overline{U_p})$ (for all $f\in C(\Dc)$) is a compactness multiplier and $H_{R_{U_p}(f)}^{U_p}R_{U_p}$ is a compact operator on $A^2(\D)$ for all $f\in C(\Dc)$.\\

\emph{Case 2:} As for the points in $K$, we use the following construction.
For each $j\geq 1$ define an open set \[U_j:=\{z\in\C^n\ |\ \text{dist}(z,K)<1/j\}\]
containing the zero set $K$ and choose $\chi_j(z)\in C^{\infty}(\C^n)$ such that $0\leq\chi_j(z)\leq 1$, $\chi_j(z)\equiv 1$ on $\C^n\backslash U_j$, and $\chi_j(z)=0$ on $U_{2j}$. Now, define $f_j(z):=\chi_j(z)\cdot f(z)$, where $f(z)\in C(\Dc)$ is a compactness multiplier.

Thus, for every $j\geq 1$, $\{f_j(z)\}\subset C(\Dc)$ such that $f_j(z)\equiv 0$ on $U_{2j}$ and $f_j(z)$ is a compactness multiplier (because $K\subset U_{2j}$). Moreover, for every $j\geq 1$, $H_{R_{U_{2j}\cap\D}(f_j)}^{U_{2j}\cap\D}R_{U_{2j}\cap\D}\equiv 0$ and is a compact operator on $A^2(\D)$.\\
\\
Thus, by \emph{Cases} $1$ and $2$ we conclude that for every $p\in b\D$ there is an open neighborhood $U_p$ such that $\D\cap U_p$ is a domain, and $H_{R_{\D\cap U_p}(f_j)}^{\D\cap U_p}R_{\D\cap U_p}$ is compact on $A^2(\D)$. 
By \cite[Proposition $1$, $(ii)$]{CuckovicSahutoglu09} we conclude that $H_{f_j}^{\D}$ is compact on $A^2(\D)$.\\

Now, the idea is to approximate $f$ uniformly on $\Dc$ by the above constructed sequence of functions $\{f_j\}$.
To show that $H_f^{\D}$ is a compact operator on $A^2(\D)$, it is enough to see that the Hankel operators $\left\{H_{f_j}^{\D}\right\}$ converge to $H_f^{\D}$ in operator norm. Indeed, 
\begin{align}
H_f^{\D}-H_{f_j}^{\D}=(I-P)(f)-(I-P)(f_j)=(f-f_j)I-P(f-f_j)
\end{align}
and the operator norm of the multiplication by $(f-f_j)$ on $L^2(\D)$ is $\text{max}_{z\in\Dc}|(f-f_j)|$. Thus,

\begin{align}
\|H_f^{\D}-H_{f_j}^{\D}\|\rightarrow 0,\ \text{ as }\ j\rightarrow\infty. 
\end{align}

\end{proof}

\begin{remark}
If $H_{f}$ is a compact Hankel operator on $A^2(\D)$ then its symbol function $f$ is not necessarily a compactness multiplier. For example, $H_{z_1}\equiv 0$ and so is compact on $A^2(\D)$ where $\D$ is a smoothed bi-disc, however $z_1$ is not a compactness multiplier because $z_1\not=0$ on $\{(z_1,z_2)\ :\ 0\leq|z_2|\leq 1/2\ \text{ and }\ |z_1|=1 \}$. See Example \ref{ex1} for more details.
\end{remark}


\begin{remark}
A Hankel operator with a symbol function $f$ is equal to the negative of the commutator operator with the multiplication symbol $f$ and Bergman projection $P$, $H_f(u)=(I-P)(fu)=-[P,f](u)$. The result in Theorem \ref{thm1} also applies for the commutator operator $[P,f]$ on $A^2(\D)$. We conclude that on a smooth bounded pseudoconvex domain $\D$, if $f\in C(\Dc)$ is a compactness multiplier then the commutator operator $[P,f]$ is compact on $A^2(\D)$. Moreover, by employing \cite[Corollary 2]{CelikSahutoglu2014} we further deduce that the commutator operator $[P_q,f]$ is compact on $A_{(0,q)}^2(\D)$ for all $0\leq q\leq n-1$, where $P_q$ denotes the orthogonal projection from $L_{(0,q)}^2(\D)$ onto the subspace of $\dbar$-closed forms.
\end{remark}

\section{Compactness Multipliers Machinery}
A subelliptic estimate of the $\dbar$-Neumann problem is a stronger condition than a compactness estimate. Thus, every subelliptic multiplier is also a compactness multiplier but the converse is false. Indeed, consider a convex domain $\D$ in $\C^2$ and on the boundary of this domain have a set of infinite type points with empty Euclidean interior, any smooth function on $\Dc$ not vanishing on the boundary of the domain, $b\D$, is a compactness multiplier, but not a subelliptic multiplier. Kohn \cite{Kohn79} developed subelliptic multipliers and created an algorithmic procedure for computing certain ideals. He used these ideals to find out if there is a complex analytic variety in the boundary and if there is a subelliptic estimate. Creating an analogue algorithmic procedure with ideals of compactness multipliers for compactness estimate can be helpful to examine obstructions for the compactness property of the $\dbar$-Neumann operator and of Hankel operators. However, it is important to note that Kohn's algorithm is in space of real analytic functions. The ring of real analytic functions is a Noetherian ring, where every prime ideal of the ring is finitely generated. This property of the ring of functions plays a fundamental role in Kohn's algorithm, it determines when such ideals define trivial varieties on the boundary of the domain. 
Since the $\dbar$-Neumann operator and its compactness property very much depend on the boundary geometry of the domain, the role of the defining function in the theory of compactness multipliers becomes fundamental. The defining function itself being a compactness multiplier helps us to connect the geometry of the domain with the compactness multiplier notion. Because we are working on domains with smooth boundaries (not necessarily real analytic) this forces us to work with compactness multipliers from the ring of smooth functions. However, the ring of smooth functions is not Noetherian. Absence of this essential property makes establishing an analogous algorithm with compactness multipliers challenging. In this write up, we ignore questions relating to the algorithmic point of view.
Instead, we study the connections of compactness multipliers with a symbol function of a compact Hankel operator.

The following proposition establishes a connection between a compactness multiplier and an allowable vector field, analogous to subelliptic multiplier case, see  \cite{Kohn79} or \cite{D'AngeloBook1992}. 
\begin{proposition}\label{p2}
Let $\D$ be a smooth bounded pseudoconvex domain in $\C^n$. Suppose that $f\in C^{2}(\overline{\D})$ is a compactness multiplier. Then $\partial f$ is an allowable vector field. That is, for every $\varepsilon>0$ there exists $C_{\varepsilon,\partial f}>0$ such that  
\[\left\|\sum_{j=1}^{n}\frac{\partial f}{\partial z_{j}}u_{j}\right\|^{2}\leq \varepsilon\left(\norm{\overline{\partial}u}^{2}+\norm{\overline{\partial}^{*}u}^{2}\right)+ C_{\varepsilon,\partial f}\norm{u}_{-1}^{2}\]
for all $u\in\text{Dom}(\dbar^*)\cap\text{Dom}(\dbar)\subset L_{(0,1)}^2(\D)$. 
\end{proposition}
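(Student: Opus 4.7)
The plan is to reduce via Remark 1 to real-valued $f$, establish a clean integration-by-parts identity for $\|M_{\partial f}(u)\|^{2}$ in which the complex Hessian of $f$ cancels, and then bound the two remaining terms by combining the compactness-multiplier hypothesis with the Kohn--Morrey--Hörmander formula and Hörmander's basic estimate.

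Assume $f$ is real and write $M:=M_{\partial f}(u)=\sum_{j}(\partial f/\partial z_{j})u_{j}$. I would expand
\[
\|M\|^{2}=\int_{\D}\sum_{j,k}\frac{\partial f}{\partial z_{j}}\frac{\partial f}{\partial \bar z_{k}}u_{j}\bar u_{k}\,dV
\]
and integrate by parts the outer factor $\partial f/\partial \bar z_{k}$ using $\partial/\partial \bar z_{k}$. The resulting boundary integrand contains the factor $\sum_{k}\bar u_{k}\,\partial \rho/\partial \bar z_{k}=\overline{\sum_{k}u_{k}\,\partial \rho/\partial z_{k}}$, which vanishes on $b\D$ because $u\in\mathrm{Dom}(\dbar^{*})$. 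Expanding $\partial_{\bar z_{k}}\bigl[(\partial f/\partial z_{j})u_{j}\bar u_{k}\bigr]$ produces a Hessian term, a middle term of the form $\sum_{j}(\partial f/\partial z_{j})(\partial u_{j}/\partial \bar z_{k})\bar u_{k}$, and a term involving $\sum_{k}\partial \bar u_{k}/\partial \bar z_{k}=-\overline{\dbar^{*}u}$. The crucial observation is that rewriting $\sum_{j}(\partial f/\partial z_{j})(\partial u_{j}/\partial \bar z_{k})=\partial M/\partial \bar z_{k}-\sum_{j}(\partial^{2}f/\partial \bar z_{k}\partial z_{j})u_{j}$ in the middle piece generates a second copy of the Hessian term with the opposite sign, so the two Hessian contributions cancel and one is left with the clean identity
\[
\|M\|^{2}=-\overline{\langle fu,\dbar M\rangle}+\langle fM,\dbar^{*}u\rangle.
\]

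Bounding each of the two inner products is then routine. Cauchy--Schwarz gives $|\langle fu,\dbar M\rangle|\leq \|fu\|\|\dbar M\|$, and direct expansion of $\partial M/\partial \bar z_{k}$ yields $\|\dbar M\|^{2}\lesssim \|L_{f}\|_{\infty}^{2}\|u\|^{2}+\|\partial f\|_{\infty}^{2}\sum_{j,k}\|\partial u_{j}/\partial \bar z_{k}\|^{2}$; the Kohn--Morrey--Hörmander formula on a pseudoconvex domain controls the good-derivative sum by $\|\dbar u\|^{2}+\|\dbar^{*}u\|^{2}$, and Hörmander's basic estimate $\|u\|^{2}\lesssim \|\dbar u\|^{2}+\|\dbar^{*}u\|^{2}$ absorbs the remaining $\|u\|^{2}$, so $\|\dbar M\|^{2}\leq C_{f}(\|\dbar u\|^{2}+\|\dbar^{*}u\|^{2})$. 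For the second term, $|\langle fM,\dbar^{*}u\rangle|\leq \|fM\|\|\dbar^{*}u\|$, and the pointwise bound $|M(z)|\leq |\partial f(z)||u(z)|$ gives $\|fM\|^{2}\leq \|\partial f\|_{\infty}^{2}\|fu\|^{2}$. Both right-hand sides thus carry a factor of $\|fu\|^{2}$, to which the compactness-multiplier hypothesis applies directly, and a standard AM--GM argument with an appropriately chosen small parameter converts the bounds into $\varepsilon\bigl(\|\dbar u\|^{2}+\|\dbar^{*}u\|^{2}\bigr)+C_{\varepsilon}\|u\|_{-1}^{2}$.

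The main obstacle is the Hessian cancellation in the identity above: without it one would be left with a term of size $\|f\|_{\infty}\|L_{f}\|_{\infty}\|u\|^{2}$, and although $\|u\|^{2}$ is bounded by $\|\dbar u\|^{2}+\|\dbar^{*}u\|^{2}$ via Hörmander's estimate, the constant there is fixed and not $\varepsilon$-small. The cancellation is therefore what ensures that every surviving term is funnelled through a factor of $\|fu\|^{2}$, where the compactness-multiplier property supplies the required $\varepsilon$-smallness.
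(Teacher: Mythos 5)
Your argument is correct and is essentially the paper's own proof: both integrate by parts, use the $\mathrm{Dom}(\dbar^{*})$ boundary condition ($\sum_{j}u_{j}\,\partial r/\partial z_{j}=0$ on $b\D$) to kill the boundary integral, funnel every surviving term through $\norm{fu}^{2}$ so the compactness-multiplier hypothesis applies, control the bar-derivatives of $u$ and $\norm{u}^{2}$ by $\norm{\dbar u}^{2}+\norm{\dbar^{*}u}^{2}$ (Kohn--Morrey--H\"ormander and the basic estimate), and finish with a small-constant/large-constant balance; in fact your identity $\norm{M}^{2}=-\overline{\langle fu,\dbar M\rangle}+\langle fM,\dbar^{*}u\rangle$ is, up to complex conjugation, exactly what the paper obtains from the commutator split $\frac{\partial f}{\partial z_{j}}u_{j}=\frac{\partial(fu_{j})}{\partial z_{j}}-f\frac{\partial u_{j}}{\partial z_{j}}$, and the Hessian cancellation you call crucial is a convenience rather than a necessity, since the paper simply absorbs the Hessian-of-$f$ contribution (inside $\dbar\psi$) with the small constant and the basic estimate. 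The only step you should add is the standard density-lemma reduction to forms smooth up to the boundary before integrating by parts, exactly as the paper does at the outset.
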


\begin{remark}
Consequently, for $1\leq q\leq n$, the operator 
$M_{\partial f}:\mbox{Dom}(\overline{\partial})\cap \mbox{Dom}(\overline{\partial}^{*})\left(\subset L_{(0,q)}^2(\D)\right)\rightarrow L_{(0,q-1)}^{2}(\Omega)$
is a compact operator, providing a property weaker than compactness of the $\dbar$-Neumann operator.
\end{remark}

\begin{corollary}\label{def-allowable}
Let $\D$ be a smooth bounded pseudoconvex domain in $\C^n$. Let $r$ be a smooth defining function for $\D$. Then the vector field
$\partial r=\sum_{j=1}^{n}\frac{\partial r}{\partial z_{j}}\frac{\partial}{\partial z_{j}}$ is allowable.
\end{corollary}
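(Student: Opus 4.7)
The plan is to derive the corollary as an immediate application of Proposition \ref{p2}, once we verify that the defining function $r$ itself is a compactness multiplier. Since Proposition \ref{p2} says that $\partial f$ is allowable whenever $f \in C^2(\Dc)$ is a compactness multiplier, and $r$ is smooth, the whole problem reduces to showing $r \in J$, the closed ideal of compactness multipliers in $C(\Dc)$.

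First I would recall the structural description of $J$ given just above the proof of Theorem \ref{thm1}: on a smooth bounded pseudoconvex domain,
\[
J \;=\; \{\,f\in C(\Dc)\ :\ f|_{K}\equiv 0\,\},
\]
where $K$ is the common zero set of all compactness multipliers, and moreover $K\subset (b\D)_{\infty}\subset b\D$. So to place $r$ in $J$ it suffices to check that $r$ vanishes on $K$.

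Next I would use the defining property of $r$: since $r$ is a defining function for $\D$, we have $r\equiv 0$ on $b\D$. Because $K\subset b\D$, this forces $r|_{K}\equiv 0$, so $r\in J$, i.e., $r$ is a compactness multiplier. Since $r$ is smooth up to the boundary (hence in $C^2(\Dc)$), Proposition \ref{p2} applies with $f=r$, yielding that
\[
\partial r \;=\; \sum_{j=1}^{n}\frac{\partial r}{\partial z_{j}}\,\frac{\partial}{\partial z_{j}}
\]
is an allowable vector field, which is exactly the claim.

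There is essentially no obstacle here beyond invoking the correct previously proved facts; the only subtlety worth flagging is the dependence on the ideal characterization $J=\{f\in C(\Dc):f|_K\equiv 0\}$, which is what packages the intuition ``any $C(\Dc)$ function vanishing on $b\D$ is a compactness multiplier'' into a statement usable in a one-line verification.
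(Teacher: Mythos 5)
Your proof is correct and follows essentially the same route as the paper: reduce the claim to the fact that the defining function $r$ is a compactness multiplier and then invoke Proposition \ref{p2}. The paper justifies that $r$ is a compactness multiplier by citing \cite[Remark 2]{CelikStraube09} directly, whereas you deduce it from the ideal characterization $J=\{f\in C(\Dc):f|_K\equiv 0\}$ with $K\subset b\D$; these amount to the same previously established fact.
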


\begin{proof}[ Proof of Corollary \ref{def-allowable}]
To see that $r$ is a compactness multiplier check with \cite[Remark 2]{CelikStraube09} (or \cite[Proposition 4]{CelikPhD08}) and then by Proposition \ref{p2} we obtain that $\partial r$ is an allowable vector field.
 
\end{proof}


\begin{proof}[Proof of Proposition \ref{p2}]
Initially, we work with smooth $(0,1)$-forms in Dom$(\dbar^*)$ and then at the end of the proof we use the Density Lemma \cite[Lemma 4.3.2]{ChenShawBook} (or \cite[Proposition 2.3]{StraubeBook}) to move the result to Dom$(\dbar)\cap$Dom$(\dbar^*)$. Thus, let $u\in C_{(0,1)}^{\infty}(\D)\cap \text{Dom}(\dbar^*)$. \\

Let $\psi:=\sum_{j=1}^{n}\frac{\partial f}{\partial z_{j}}u_{j}$, so
\begin{align}\label{eq1}
\left\|\sum_{j=1}^{n}\frac{\partial f}{\partial z_{j}}u_{j}\right\|^{2}&=\left(\sum_{j=1}^{n}\left(\frac{\partial (fu_{j})}{\partial z_{j}}-f\frac{\partial u_{j}}{\partial z_{j}}\right),\psi\right)\\
\nonumber &=\left(\sum_{j=1}^{n}\frac{\partial (fu_{j})}{\partial z_{j}},\psi\right)+\left(-f\sum_{j=1}^{n}\frac{\partial u_{j}}{\partial z_{j}},\psi\right)
\end{align}
Now, let's estimate the last term,
\begin{align}\label{eq10}
\nonumber \abs{\left(-\sum_{j=1}^{n}\frac{\partial u_{j}}{\partial z_{j}},\overline{f}\psi\right)}=\abs{\left(\dbar^{*}u,\overline{f}\psi\right)}&\leq \left\|\ \dbar^{*}u\right\|\ \norm{f\psi}\\
 &\leq (a/2)\norm{\overline{\partial}^{*}u}^{2}+1/(2a)\norm{f\psi}^{2}\ \text{ for any }\ a>0. 
\end{align}
The second inequality follows from the small constant-large constant inequality\footnote{We refer to $(a/2)$ as a small constant, $1/(2a)$ as a large constant.}. As for the last term $\norm{f\psi}^{2}$, first notice that 
\[\psi=\sum_{k=1}^n \frac{\partial f}{\partial z_k} u_k=\sum_{k=1}^n\frac{\partial}{\partial z_k}\left(fu_k\right)-\sum_{k=1}^n f\frac{\partial u_k}{\partial z_k}=\sum_{k=1}^n\left[\frac{\partial}{\partial z_k},f\right]u_k,\] 
and
\begin{align*}
f\psi=f\sum_{k=1}^n\left[\frac{\partial}{\partial z_k},f\right]u_k&=f\sum_{k=1}^n \frac{\partial f}{\partial z_k} u_k\\
&=2f\sum_{k=1}^n \frac{\partial f}{\partial z_k} u_k+f^2\sum_{k=1}^n \frac{\partial u_k}{\partial z_k}-f\sum_{k=1}^n \frac{\partial f}{\partial z_k} u_k-f^2\sum_{k=1}^n \frac{\partial u_k}{\partial z_k}\\
&=\sum_{k=1}^n\frac{\partial}{\partial z_k}\left(f^2 u_k\right)-f\sum_{k=1}^n\frac{\partial}{\partial z_k}\left(fu_k\right)\\
&=\sum_{k=1}^n\left[\frac{\partial}{\partial z_k},f\right](fu_k)
\end{align*}
As an operator, $\sum_{k=1}^n\left[\frac{\partial}{\partial z_k},f\right]$ is a zeroth-order pseudo-differential operator, so 
\[\left\|f\psi\right\|^{2}=\left\|\sum_{k=1}^n\left[\frac{\partial}{\partial z_k},f\right]\left(fu_k\right)\right\|^{2}\lesssim \left\|\sum_{k=1}^n fu_k\right\|^{2}.\] 
Second, since $f$ is a compactness multiplier (from the hypothesis) for every $\varepsilon>0$ there exists $C_{\varepsilon, f}>0$ such that
\begin{align}\label{eq11}  
\left\|f\psi\right\|^{2}\lesssim\left\|fu\right\|^{2}\leq\varepsilon\left(\norm{\overline{\partial}u}^{2}+\norm{\overline{\partial}^{*}u}^{2}\right)+ C_{\varepsilon,f}\norm{u}_{-1}^{2}
\end{align}
for all $u\in\text{Dom}(\dbar^*)\cap\text{Dom}(\dbar)\subset L_{(0,1)}^2(\D)$. When we put this estimate in \eqref{eq10} we obtain
\begin{align}\label{eq12}
\abs{\left(-\sum_{j=1}^{n}\frac{\partial u_{j}}{\partial z_{j}},\overline{f}\psi\right)}\leq (a/2)\norm{\overline{\partial}^{*}u}^{2}+1/(2a)\left(\varepsilon\left(\norm{\overline{\partial}u}^{2}+\norm{\overline{\partial}^{*}u}^{2}\right)+ C_{\varepsilon,f}\norm{u}_{-1}^{2}\right) \text{ for any }\ a>0. 
\end{align}

To handle the remaining term $\sum_{j=1}^{n}\left(\frac{\partial (fu_{j})}{\partial z_{j}},\psi\right)$ in \eqref{eq1} we use  integration by parts,
\begin{align}\label{eq13}
\sum_{j=1}^{n}\left(\frac{\partial (fu_{j})}{\partial z_{j}},\psi\right)&=\sum_{j=1}^{n}\left(-fu_{j},\frac{\partial \psi}{\partial \zb_{j}}\right)+\sum_{j=1}^{n}\int_{b\Omega}f\frac{\partial r}{\partial z_{j}}u_{j}\overline{\psi}\\
\nonumber &=\sum_{j=1}^{n}\left(-fu_{j},\frac{\partial \psi}{\partial \zb_{j}}\right).
\end{align}
The boundary integral in the integration by parts vanishes because $u$ is in the domain of $\overline{\partial}^{*}$.
Then,
\begin{align}\label{eq14}
\sum_{j=1}^{n}\left(\frac{\partial (fu_{j})}{\partial z_{j}},\psi\right) &=\sum_{j=1}^{n}\left(-fu_{j},\frac{\partial \psi}{\partial \overline{z}_{j}}\right)\\
\nonumber &=\sum_{j=1}^{n}\left(-fu_{j},\frac{\partial}{\partial \overline{z}_{j}}\left(\sum_{k=1}^{n}\frac{\partial f}{\partial z_{k}}u_{k}\right)\right)
\end{align}
Now, by the Cauchy-Schwartz inequality we get
\begin{align}
\nonumber  \sum_{j=1}^{n}\left(-fu_{j},\frac{\partial}{\partial \overline{z}_{j}}\left(\sum_{k=1}^{n}\frac{\partial f}{\partial z_{k}}u_{k}\right)\right)&\leq \sum_{j=1}^{n}\left\|fu_{j}\right\|\ \sum_{j=1}^{n}\left\|\frac{\partial}{\partial\overline{z}_{j}}\left( \sum_{k=1}^{n}\frac{\partial f}{\partial z_{k}}u_{k}\right)\right\|
\end{align}
Then by the small constant-large constant inequality we obtain
\begin{align}
\nonumber  \sum_{j=1}^{n}\left\|fu_{j}\right\|\ \sum_{j=1}^{n}\left\|\frac{\partial}{\partial\overline{z}_{j}}\left( \sum_{k=1}^{n}\frac{\partial f}{\partial z_{k}}u_{k}\right)\right\|&\leq 1/(2a)\sum_{j=1}^{n}\norm{fu_{j}}^{2}+(a/2)\sum_{j=1}^{n}\left\|\frac{\partial}{\partial\overline{z}_{j}}\left( \sum_{k=1}^{n}\frac{\partial f}{\partial z_{k}}u_{k}\right)\right\|^{2}
\end{align}
An estimate for the second term follows from the bar derivatives of $u$, which are controlled (in $L^2(\D)$) by $\dbar u$ and $\dbar^* u$. That is,
\begin{align}\label{eq15}
 1/(2a)\sum_{j=1}^{n}\norm{fu_{j}}^{2}+(a/2)\sum_{j=1}^{n}\left\|\frac{\partial}{\partial\overline{z}_{j}}\left( \sum_{k=1}^{n}\frac{\partial f}{\partial z_{k}}u_{k}\right)\right\|^{2}&\lesssim 1/(2a)\norm{fu}^{2}+(a/2)\left(\norm{\overline{\partial}u}^{2}+\norm{\overline{\partial}^*u}^{2}\right).
\end{align}
When we put this estimate back in \eqref{eq14} and \eqref{eq13} we get
\begin{align*}
\left| \sum_{j=1}^{n}\left(\frac{\partial (fu_{j})}{\partial z_{j}},\psi\right)\right| \lesssim 1/(2a)\norm{fu}^{2}+(a/2)\left(\norm{\overline{\partial}u}^{2}+\norm{\overline{\partial}^*u}^{2}\right).
\end{align*}
When we use \eqref{eq11} for the first term we get
\begin{align}\label{eq16}
\left| \sum_{j=1}^{n}\left(\frac{\partial (fu_{j})}{\partial z_{j}},\psi\right)\right| \lesssim 1/(2a)\left(\varepsilon\left(\norm{\overline{\partial}u}^{2}+\norm{\overline{\partial}^{*}u}^{2}\right)+ C_{\varepsilon,f}\norm{u}_{-1}^{2}\right)+(a/2)\left(\norm{\overline{\partial}u}^{2}+\norm{\overline{\partial}^*u}^{2}\right).
\end{align}

Finally, we put \eqref{eq16} and \eqref{eq12} into \eqref{eq1} to get 
\begin{align}\label{eq17}
\left\|\sum_{j=1}^{n}\frac{\partial f}{\partial z_{j}}u_{j}\right\|^{2}&\lesssim (a/2)\left(\norm{\overline{\partial}u}^{2}+\norm{\overline{\partial}^*u}^{2}\right)\\
\nonumber &+(a/2)\norm{\overline{\partial}^{*}u}^{2}+1/a\left(\varepsilon\left(\norm{\overline{\partial}u}^{2}+\norm{\overline{\partial}^{*}u}^{2}\right)+ C_{\varepsilon,f}\norm{u}_{-1}^{2}\right)\\
\nonumber &\leq \left(a+\frac{\varepsilon}{a}\right)\left(\norm{\overline{\partial}u}^{2}+\norm{\overline{\partial}^{*}u}^{2}\right)+\frac{C_{\varepsilon,f}}{a}\norm{u}_{-1}^{2}
\end{align}
We set $a=\sqrt{\varepsilon}$ and choose $\varepsilon$ small enough to get the desired estimate in Proposition \ref{p2}.

\end{proof}


Recall that an \emph{allowable matrix} is the same thing as saying that each row of the matrix is an \emph{allowable row}. For example, if each $j^{\text{th}}$-row of a matrix is constructed from components $\{v_{ij}\}$ of an allowable vector field $v_j=\sum_{i=1}^{n} v_{ij}\frac{\partial}{\partial z_i}$ then the matrix is an allowable matrix.

\begin{proposition}\label{determinant}
Let $\D$ be a smooth bounded pseudoconvex domain in $\C^n$. If $A(z)$ is an allowable matrix with $A_{ij}(z)\in
\cC^{\infty}(\overline{\Omega})$ then the determinant of the matrix $A$ is a compactness multiplier.
\end{proposition}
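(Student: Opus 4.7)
My plan is to reduce the desired compactness multiplier estimate for $\det(A)$ to the given allowable matrix estimate for $A$ by means of a pointwise Cramer-type identity. The idea is that $\det(A)u_j$ can be written as a bounded linear combination of the components of $Au$, so $\|\det(A)u\|$ is controlled by $\|Au\|$, which is controlled by $\bar\partial,\bar\partial^{*}$, and the $(-1)$-Sobolev norm.

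To make this precise, I would fix a $(0,1)$-form $u=\sum_{k=1}^{n}u_{k}\,d\bar z_{k}\in\mbox{Dom}(\bar\partial)\cap\mbox{Dom}(\bar\partial^{*})$ and let $C_{ij}(z)$ denote the $(i,j)$-cofactor of the matrix $A(z)$. Viewing $Au$ as the coefficient vector with $(Au)_{i}=\sum_{k}A_{ik}u_{k}$, Cramer's rule gives the pointwise identity
\[
\det(A(z))\,u_{j}(z)=\sum_{i=1}^{n}C_{ij}(z)(Au)_{i}(z)
\qquad(1\le j\le n),
\]
which holds as a polynomial identity regardless of whether $A(z)$ is invertible. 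Since the entries of $A$ are smooth on the compact set $\overline{\Omega}$, the cofactors $C_{ij}$ are bounded there, say $|C_{ij}(z)|\le M$ for some $M>0$. Squaring, summing over $j$, and applying Cauchy--Schwarz then yields
\[
\|\det(A)\,u\|^{2}=\sum_{j=1}^{n}\|\det(A)\,u_{j}\|^{2}\le n^{2}M^{2}\,\|Au\|^{2}.
\]

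Given $\varepsilon>0$, I would apply the allowable matrix hypothesis \eqref{matrix6} to $A$ with parameter $\varepsilon/(n^{2}M^{2})$ and combine with the previous display to obtain
\[
\|\det(A)\,u\|^{2}\le \varepsilon\bigl(\|\bar\partial u\|^{2}+\|\bar\partial^{*}u\|^{2}\bigr)+n^{2}M^{2}C_{\varepsilon/(n^{2}M^{2})}\,\|u\|_{-1}^{2},
\]
which is exactly the compactness multiplier estimate for $\det(A)$ on $(0,1)$-forms. Since $\varepsilon$ was arbitrary, $\det(A)$ is a compactness multiplier.

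I do not anticipate a serious obstacle: the entire argument is an algebraic reduction followed by a direct invocation of the hypothesis. The only point that deserves care is the Cramer identity, which must be stated in its universal (non-invertible) form so that it applies pointwise at every $z\in\overline{\Omega}$, including points where $\det(A(z))=0$. Once that identity is in hand, uniform boundedness of the cofactors on the compact set $\overline{\Omega}$ and the allowable matrix hypothesis do the rest; no integration by parts, density argument, or geometry of the boundary enters the proof.
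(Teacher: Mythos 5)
Your proof is correct, and its overall architecture is the same as the paper's: both reduce the compactness multiplier estimate for $\det(A)$ to the allowable matrix estimate for $A$ via a pointwise linear-algebra inequality of the form $\abs{\det(A(z))}^2\abs{\xi}^2\leq C\abs{A(z)\xi}^2$, then integrate and invoke \eqref{matrix6} with a rescaled $\varepsilon$. The only genuine divergence is how that pointwise inequality is obtained: the paper forms $B(z)=A^*(z)A(z)$, notes $\det(B)=\abs{\det(A)}^2$, and bounds $\det(B)\abs{\xi}^2$ by $(B\xi,\xi)_E$ using a lower bound on the smallest eigenvalue at points where $B$ is definite (handling the degenerate points separately), whereas you use the universal adjugate identity $\det(A)\,u_j=\sum_i C_{ij}(Au)_i$ and the uniform boundedness of the cofactors on $\Dc$. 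Your route is somewhat cleaner: it needs no case split on $\det(A(z))=0$ and sidesteps the mildly delicate claim in the paper that the function $C(z)=\det(B(z))/C'(z)$ can be taken strictly positive and smooth on all of $\Dc$ (a point most easily repaired by taking $C$ to be a power of $\norm{B}$, or, as you do, by working with cofactors). Both arguments yield the stated conclusion with no boundary geometry or integration by parts.
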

\begin{proof}

Let $A(z)$ be such an allowable matrix. Then $B(z):=A^{*}(z)A(z)$ is a Hermitian positive semi-definite matrix, $B_{ij}(z)\in \cC^{\infty}(\overline{\Omega})$ and there exists strictly positive $C(z)\in\cC^{\infty}(\overline{\Omega})$ such that
\begin{align}\label{nn1}
(\det\left(B(z)\right)u,u)_{E}\leq C(z)(B(z)u,u)_{E},
\end{align}
where the inner product $(\cdot,\cdot)_E$ is the standard one in $\C^n$ and do not involve integration. 

Indeed, if $\det\left(B(z)\right)=0$ then \eqref{nn1} holds trivially. If $\det\left(B(z)\right)>0$, that is $B(z)$ is a Hermitian positive definite matrix, then there exists strictly positive $C'(z)\in\cC^{\infty}(\overline{\Omega})$ such that 
$$C'(z)\abs{\xi}^{2}\leq \left(B(z)\xi,\xi\right)_E$$ 
for all $\xi\in \C^{n}$. Multiply both sides by $\frac{\det\left(B(z)\right)}{C'(z)}$ to have 
$$\det\left(B(z)\right)\abs{\xi}^{2}\leq \frac{\det\left(B(z)\right)}{C'(z)}\left(B(z)\xi,\xi\right)_{E}.$$ 
Set $C(z)=\frac{\det\left(B(z)\right)}{C'(z)}$ to get \eqref{nn1}. We also set $C=\max_{\overline\D}C(z)$, 
then considering the inequality \eqref{nn1} and $u\in \cC_{(0,1)}^{\infty}(\D)\cap \text{dom}(\overline{\partial}^{*})$, we have
\begin{align*}
(\det(A(z)A^{*}(z))u,u)_{E}\leq C(A^{*}(z)A(z)u,u)_{E}=C\abs{A(z)u}_{E}^2.
\end{align*}
Then
\begin{align*}
\norm{\det\left(A(z)\right)u}_{L^{2}}^{2}=\sum_{j=1}^{n}\int\limits_{\D} \abs{\det(A(z))}_{E}^{2}\abs{u_{j}}^{2}\leq C\int\abs{A(z)u}_{E}^{2}
=C\norm{A(z)u}_{L^{2}}^{2}.
\end{align*}
Since $A(z)$ is an allowable matrix, we have $\det\left(A(z)\right)$ as a compactness multiplier.

\end{proof}

\begin{remark}
A special set of allowable matrices for the compactness estimate of the $\dbar$-Neumann problem has already been studied by Straube in \cite{Straube2008}.
\end{remark}

\subsection{Complex Hessian and Compact Hankel Operators}

We also present ways of producing other symbols which induce compact Hankel operators on the same domain. First, we present an installment of a fundamental symbol, developed completely from the defining function, more specifically, complex Hessian, carrying information about the boundary geometry of the domain. Second, we present how to identify more compact Hankel operators by using derivatives of the symbols in an iterative sense, with every iteration of the derivative one will be able to generate another compact Hankel operator on the domain.

\begingroup
\def\thetheorem{\ref{det-compute}}
\begin{theorem}
Let $\D$ a smooth bounded pseudoconvex domain in $\C^n$ that admits a smooth plurisubharmonic defining function $r$ such that $|dr|=1$ on $b\Omega$. Then $H_{\det(\VL)}$ 
is a compact Hankel operator on $A^2(\D)$, where $\VL$ is the complex Hessian matrix.
\end{theorem}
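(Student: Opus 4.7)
The proof is a double reduction ending in a single integrated Levi-form estimate. By Theorem \ref{thm1}, since $\det(\VL)\in C(\Dc)$, it suffices to show $\det(\VL)$ is a compactness multiplier, and by Proposition \ref{determinant} this follows once $\VL$ itself is shown to be an allowable matrix.

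To establish allowability of $\VL$, I would exploit the plurisubharmonicity of $r$: pointwise, $\VL(z)$ is a Hermitian positive semidefinite matrix, and any such matrix $M$ satisfies the algebraic operator inequality $M^2\le\|M\|_{\mathrm{op}}\,M$, giving
\[
|\VL(z)\xi|^2\;\le\;\|\VL(z)\|_{\mathrm{op}}\,(\VL(z)\xi,\xi)_E\;\le\; C\,(\VL(z)\xi,\xi)_E,\qquad \xi\in\C^n,
\]
with $C=\sup_{\Dc}\|\VL(z)\|_{\mathrm{op}}<\infty$ by smoothness of $r$. Integrating,
\[
\|\VL u\|^2\;\le\; C\int_\D\sum_{j,k}r_{z_j\bar z_k}\,u_k\bar u_j\,dV,
\]
so allowability of $\VL$ reduces to the single integrated Levi-form estimate
\[
\int_\D\sum_{j,k}r_{z_j\bar z_k}\,u_k\bar u_j\,dV\;\le\;\varepsilon(\|\dbar u\|^2+\|\dbar^{*}u\|^2)+C_\varepsilon\|u\|_{-1}^2
\]
for all $u\in\mathrm{Dom}(\dbar)\cap\mathrm{Dom}(\dbar^{*})\subset L^2_{(0,1)}(\D)$.

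I would prove this estimate by integration by parts, starting with smooth $u$ in $\mathrm{Dom}(\dbar^{*})$ and passing to the general case via the density lemma. Writing $r_{z_j\bar z_k}=\partial r_{z_j}/\partial\bar z_k$ and moving $\partial/\partial\bar z_k$ off $r_{z_j}$ onto $u_k\bar u_j$ produces interior terms containing the mixed $\bar z$-derivatives of $u$, together with a surface integral over $b\D$. The interior terms carry a bounded factor of $r_{z_j}$ and derivatives of the form $\partial u_k/\partial\bar z_k$ and $\partial \bar u_j/\partial\bar z_k$; the Morrey--Kohn--H\"ormander basic identity controls $\sum_{j,k}\|\partial u_j/\partial\bar z_k\|^2$ by $\|\dbar u\|^2+\|\dbar^{*}u\|^2$ after absorbing the non-negative boundary Levi contribution via pseudoconvexity, and Cauchy--Schwarz with a small-constant/large-constant split then bounds the interior portion by $\varepsilon(\|\dbar u\|^2+\|\dbar^{*}u\|^2)+C_\varepsilon\|u\|^2$. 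The residual $\|u\|^2$ is upgraded to $\|u\|_{-1}^2$ via the Rellich compactness of $H^{1/2}(\D)\hookrightarrow L^2(\D)$ together with the standard $\tfrac12$-Sobolev bound $\|u\|_{1/2}^2\lesssim \|u\|^2+\|\dbar u\|^2+\|\dbar^{*}u\|^2$ valid on smooth bounded pseudoconvex domains for forms in $\mathrm{Dom}(\dbar)\cap\mathrm{Dom}(\dbar^{*})$.

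The main obstacle is the surviving boundary integral $\int_{b\D}(\sum_k r_{\bar z_k}u_k)(\sum_j r_{z_j}\bar u_j)\,dS$ left over after the integration by parts. In contrast to the setup of Proposition \ref{p2}, the $\mathrm{Dom}(\dbar^{*})$ condition $\sum_j r_{z_j}u_j=0$ on $b\D$ does not automatically annihilate either factor of this product. However, with $|dr|=1$ on $b\D$, the boundary integrand is pointwise dominated by $|\dbar r|^2|u|^2\le\tfrac14|u|^2$, so the trace estimate $\int_{b\D}|u|^2\,dS\lesssim\|u\|^2+\|\dbar u\|^2+\|\dbar^{*}u\|^2$ (obtained on smooth bounded pseudoconvex domains by combining the $\tfrac12$-Sobolev bound with the standard trace theorem) together with the same Rellich argument absorbs this boundary contribution. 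As a side observation, Theorem \ref{det-compute} is precisely the $A=\VL$, $m=1$ case of Theorem \ref{general allowable}, so in practice this entire argument can be absorbed into the proof of that more general result.
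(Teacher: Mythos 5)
Your opening reduction is exactly the paper's: Theorem \ref{thm1} plus Proposition \ref{determinant} (or, equivalently, the pointwise inequality $\VL^2\le\|\VL\|_{\mathrm{op}}\VL$) reduce everything to the single integrated Levi-form estimate $\int_\D(\VL u,u)_E\le\varepsilon\left(\norm{\dbar u}^2+\norm{\dbar^*u}^2\right)+C_\varepsilon\norm{u}_{-1}^2$. The gap is in your proof of that estimate. First, integrating by parts in $\bar z_k$ produces interior terms containing not only $\partial u_k/\partial\bar z_k$ but also $\partial\bar u_j/\partial\bar z_k=\overline{\partial u_j/\partial z_k}$, i.e.\ holomorphic-direction derivatives of $u$; the Morrey--Kohn--H\"ormander identity controls only the barred derivatives $\partial u_j/\partial\bar z_k$, so these terms are not bounded by $\norm{\dbar u}^2+\norm{\dbar^*u}^2$. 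Second, and more fundamentally, the two estimates you invoke to absorb the boundary integral --- the half-Sobolev bound $\norm{u}_{1/2}^2\lesssim\norm{u}^2+\norm{\dbar u}^2+\norm{\dbar^*u}^2$ and the trace bound $\int_{b\D}|u|^2\,dS\lesssim\norm{u}^2+\norm{\dbar u}^2+\norm{\dbar^*u}^2$ --- are false on general smooth bounded pseudoconvex domains. The half-Sobolev bound is a subelliptic estimate; combined with Rellich it would make the $\dbar$-Neumann operator compact on \emph{every} smooth bounded pseudoconvex domain, contradicting, e.g., the smoothed polydisc of Example \ref{ex1}, which has analytic discs in its boundary yet admits a plurisubharmonic defining function and so satisfies the hypotheses of the theorem. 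The same example defeats the trace bound: forms approximately of the shape $f_j(z_1)\,d\bar z_2$ with $f_j$ holomorphic, $L^2$-normalized and concentrating at $|z_1|=1$, keep $\norm{\dbar u}^2+\norm{\dbar^*u}^2+\norm{u}^2$ bounded while their boundary traces blow up. So exactly on the domains the theorem is meant for, your absorption of the boundary term fails; indeed, if your estimates held, every $f\in C(\Dc)$ would be a compactness multiplier and the theorem would be vacuous.

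The paper (following Straube \cite{Straube2008}) proves the key estimate by a different mechanism that never needs a boundary trace of the full form: near $b\D$ one writes $u=(1-\eta)u+\eta u_N+\eta u_T$; the term $(1-\eta)u$ is handled by interior elliptic regularity; the normal part uses the genuine gain $\norm{u_N}_1\lesssim\norm{\dbar u}+\norm{\dbar^*u}$ (available because the normal component vanishes on $b\D$) together with the interpolation $\norm{\cdot}^2\le\varepsilon\norm{\cdot}_1^2+C_\varepsilon\norm{\cdot}_{-1}^2$; and for the tangential part one uses that $r$ is plurisubharmonic on all of $\Dc$, so by Fubini the Levi-form integral over the thin shell $\D\setminus\D_\varepsilon$ is the $\delta$-integral of the boundary terms of the Morrey--Kohn--H\"ormander identity on the sublevel domains $\D_\delta$, each bounded by $\norm{\dbar(\eta u_T)}^2+\norm{\dbar^*(\eta u_T)}^2$; the thinness of the shell produces the factor $\varepsilon$. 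Finally, your closing observation that the theorem is the $m=1$ case of Theorem \ref{general allowable} cannot serve as a proof, since the proof of Theorem \ref{general allowable} (via Lemma \ref{lemma1}) itself relies on the estimate established in the proof of Theorem \ref{det-compute}.
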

\addtocounter{theorem}{-1}
\endgroup

\begin{proof}
If the determinant of $\VL$ is a compactness multiplier, then by Theorem \ref{thm1} we conclude that $H_{\det(\VL)}$ is a compact Hankel operator on $A^2(\D)$. Since $\Omega$ is a pseudoconvex domain, $\VL$ is positive semidefinite matrix. Then, by the same linear algebra idea in Proposition \ref{determinant} we can find a constant $C>0$ such that
\begin{align*}
\left(\left|\det(\VL)\right|^{2}u,u\right)_{E}\leq C \cdot(\VL u,u)_{E}
\end{align*}
and so 
\[\norm{\det(\VL)u}^{2}=\int_{\Omega}( \det(\VL)u,\det(\VL)u)=\int_{\Omega}(\left|\det(\VL)\right|^{2}u,u)\leq C \int_{\Omega}(\VL u,u)_E.\] 

Thus, to show the determinant of $\VL$ is a compactness multiplier we need to show that for a given $\varepsilon>0$ there is a $C_{\varepsilon}>0$ such that

\[\int_{\Omega}\left(\VL u,u\right)_E\lesssim \varepsilon \left(\norm{\overline{\partial}u}^{2}+\norm{\overline{\partial}^{*}u}^{2}\right)+C_{\varepsilon}\norm{u}_{-1}^{2}\]

$\forall u\in C_{(0,1)}^{\infty}\left(\D\right)\cap \text{dom}\left(\overline{\partial}^{*}\right)$. 

This estimate has already been proven by Straube in \cite{Straube2008}. We go over the proof for convenience.

Note that  $\int_{\Omega}(\VL u,u)=\int_{\Omega}\sum_{j,k=1}^{n}\frac{\partial^{2} r(z)}{\partial z_{j}\partial \overline{z}_{k}} u_{j}\overline{u}_{k}\ dV(z)$. \footnote{$A\lesssim B$ if $\exists c>0$ such that $A\leq cB$.}
Let's split $u$ into its tangential and normal parts near $b\D$, $u=u_N+u_T$. Let $\eta(z)$ be a cutoff function whose support is contained in a $\mu$-neighborhood of $b\D$, $\{z\in\Dc\ |\  -\mu <r(z)<\mu\}$; $\eta(z)\equiv 1$ on $\{z\in\Dc\ |\  -\mu/2 \leq r(z)\leq\mu/2\}$; and $u=(1-\eta)u+\eta u_N+\eta u_T$.  

\begin{align}\label{eq:no5:4c}
\nonumber\int_{\Omega}\sum_{j,k=1}^{n}&\frac{\partial^{2}r(z)}{\partial z_{j}\partial \overline{z}_{k}}\ u_{j}(z)\overline{u}_{k}(z)\ dV(z)\\
\nonumber&=\int_{\Omega}\sum_{j,k=1}^{n}\frac{\partial^{2}r(z)}{\partial z_{j}\partial \overline{z}_{k}}\ \left[(1-\eta)u+\eta u_N+\eta u_T\right]_{j}(z)\cdot\overline{\left[(1-\eta)u+\eta u_N+\eta u_T\right]}_{k}(z)\ dV(z)\\
&\lesssim \int_{\Omega}\sum_{j,k=1}^{n}\frac{\partial^{2}r(z)}{\partial z_{j}\partial \overline{z}_{k}}\ (\eta u_{T})_{j}(z)\ \overline{(\eta u_{T})}_{k}(z)\ dV(z)
+ \norm{\eta u_{T}}\ \norm{\eta u_{N}}+\norm{\eta u_{N}}^{2}+\norm{(1-\eta)u}^2.
\end{align}
and then by using the large constant-small constant inequality on the second term on the right we get
\begin{align*}
\norm{\eta u_{T}}\ \norm{\eta u_{N}}\lesssim (a/2)\norm{\eta u_{T}}^{2} + 1/(2a)\norm{\eta u_{N}}^{2}.
\end{align*}
Thus, the right hand side of \eqref{eq:no5:4c} can be bounded from above by (we replace $(a/2)$ with $\varepsilon$  and $1/(2a)$ with $C_{\varepsilon}$ )
\begin{align}\label{eq:no5:4d}
\lesssim \int_{\Omega}\sum_{j,k=1}^{n}\frac{\partial^{2}r(z)}{\partial z_{j}\partial \overline{z}_{k}}\ (\eta u_{T})_{j}(z)\ \overline{(\eta u_{T})}_{k}(z)\ dV(z)
+\varepsilon\norm{\eta u_{T}}^2+ C_{\varepsilon}\norm{\eta u_{N}}^2+\norm{(1-\eta)u}^2.
\end{align}
$\norm{(1-\eta)u}^2$ is supported at the interior points of $\Omega$, so it has a compactness estimate by the interior elliptic regularity. For the $C_{\varepsilon}\norm{\eta u_{N}}^2$ term we use interpolation inequality between Sobolev norms (that is, the estimate $\norm{u}^2\leq\varepsilon\norm{u}_1^2+C_{\varepsilon}\norm{u}_{-1}^2$), and that the normal component is having Sobolev $1$- subelliptic estimate (that is, $\norm{u_{N}}_{1}\lesssim \norm{\overline{\partial}u}+\norm{\overline{\partial}^{*}u}$. Moreover, $\norm{\eta u_{T}}^2\leq\norm{u}^2\lesssim \norm{\overline{\partial}u}^2+\norm{\overline{\partial}^{*}u}^2$. Thus, the right hand side of (\ref{eq:no5:4d}) is bounded by 

\begin{align}\label{eq:no5:4e}
 \lesssim\int_{\Omega}\sum_{j,k=1}^{n}\frac{\partial^{2}r(z)}{\partial z_{j}\partial \overline{z}_{k}}\ (\eta u_{T})_{j}(z)\ \overline{(\eta u_{T})}_{k}(z)\ dV(z)+ \varepsilon (\norm{\overline{\partial}u}^{2}+\norm{\overline{\partial}^{*}u}^{2})+C_{\varepsilon}\norm{u}_{-1}^{2}.
\end{align}
As for the first term of (\ref{eq:no5:4e}): Let $\D_{\delta}:=\{z\in\D\ |\  d(z,b\D)<-\delta\}$ for $0\leq\delta\leq \varepsilon$. Note that $u_{T}$ is in $\text{dom}(\dbar^*)$ on $\Omega_{\delta}$ by the definition of $u_T$. Thus, we split the integral, 
\[\int_{\Omega}\sum_{j,k=1}^{n}\frac{\partial^{2}r(z)}{\partial z_{j}\partial \overline{z}_{k}}\ (\eta u_{T})_{j}(z)\ \overline{(\eta u_{T})}_{k}(z)\ dV(z)\] into two integrals
\begin{align}\label{eq:no5:4f}
\int_{\Omega\diagup\Omega_{\varepsilon}}\sum_{j,k=1}^{n}\frac{\partial^{2}r(z)}{\partial z_{j}\partial \overline{z}_{k}}\ (\eta u_{T})_{j}(z)\ \overline{(\eta u_{T})}_{k}(z)\ dV(z)
+\int_{\Omega_{\varepsilon}}\sum_{j,k=1}^{n}\frac{\partial^{2}r(z)}{\partial z_{j}\partial \overline{z}_{k}}\ (\eta u_{T})_{j}(z)\ \overline{(\eta u_{T})}_{k}(z)\ dV(z).
\end{align}
The second term on the right is supported at the interior points of $\Omega$ so it has an  estimate by the interior elliptic regularity.
The first term on the right can be estimated by the help of Kohn-Morrey formula and the Fubini's theorem.
\begin{align}\label{eq:no5:4g}
\int_{\Omega\diagup\Omega_{\varepsilon}}\sum_{j,k=1}^{n}\frac{\partial^{2}r(z)}{\partial z_{j}\partial \overline{z}_{k}}&\ (\eta u_{T})_{j}(z)\ \overline{(\eta u_{T})}_{k}(z)\ dV(z)\\
\nonumber&=
\int_{0}^{\varepsilon}\int_{b\Omega_{\delta}}\sum_{j,k=1}^{n}\frac{\partial^{2}r(z)}{\partial z_{j}\partial \overline{z}_{k}}\ (\eta u_{T})_{j}(z)\ \overline{(\eta u_{T})}_{k}(z)\ d\sigma(z)\ d\delta \\
\nonumber&\leq \int_{0}^{\varepsilon}(\norm{\overline{\partial}(\eta u_{T})}^{2}+\norm{\overline{\partial}^{*}(\eta u_{T})}^{2})\ d\delta
\end{align}
Then, the right side of (\ref{eq:no5:4g}) is bounded from above by $\varepsilon\ \left(\norm{\overline{\partial} u}^{2}+\norm{\overline{\partial}^{*} u}^{2}\right)$.

Therefore, from \eqref{eq:no5:4e}, \eqref{eq:no5:4f}, and \eqref{eq:no5:4g} we have 
\begin{align}\label{eq:no23a}
\int_{\Omega}&\sum_{j,k=1}^{n}\frac{\partial^{2}r(z)}{\partial z_{j}\partial \overline{z}_{k}}\ u_{j}(z)\ \overline{u}_{k}(z)\ dV(z)\\
\nonumber &\lesssim \varepsilon\ (\norm{\overline{\partial}\ u)}^{2}+\norm{\overline{\partial}^{*}u}^{2})+C_{\varepsilon}\norm{u}_{-1}^{2}
\end{align}
\end{proof}

\begin{corollary}\label{det mult}
Let $\D$ be a smooth bounded pseudoconvex domain in $\C^n$ that admits a smooth plurisubharmonic defining function $r$.
If $f\in C^{2}(\Dc)$ is a compactness multiplier then $H_{\det(B)}$ is a compact Hankel operator on $A^2(\D)$, where $B$ is a matrix created by replacing the $j^{\text{th}}$- row of the complex Hessian matrix with the components of the vector field $\partial f=\sum_{j=1}^n\frac{\partial f}{\partial z_j}\frac{\partial}{\partial z_j}$:
\[B:=\bordermatrix{\cr 
\cr           & \frac{\partial^{2} r}{\partial z_{1}\partial\zb_{1}}  & \frac{\partial^{2} r}{\partial z_{1}\partial\zb_{2}}  & ... &\frac{\partial^{2} r}{\partial z_{1}\partial\zb_{n}}  
\cr          & \vdots  & \vdots & \vdots&\vdots
\cr         & \frac{\partial^{2} r}{\partial z_{j-1}\partial\zb_{1}}  & \frac{\partial^{2} r}{\partial z_{j-1}\partial\zb_{2}}  &  ... &\frac{\partial^{2} r}{\partial z_{j-1}\partial\zb_{n}}
\cr &\frac{\partial f}{\partial z_1} & \frac{\partial f}{\partial z_2} & ... & \frac{\partial f}{\partial z_n}
\cr         & \frac{\partial^{2} r}{\partial z_{j+1}\partial\zb_{1}}  & \frac{\partial^{2} r}{\partial z_{j+1}\partial\zb_{2}}  &  ... &\frac{\partial^{2} r}{\partial z_{j+1}\partial\zb_{n}}
\cr          & \vdots  & \vdots & \vdots&\vdots
\cr          & \frac{\partial^{2} r}{\partial z_{n}\partial\zb_{1}}  & \frac{\partial^{2} r}{\partial z_{n}\partial\zb_{2}}  & ... &\frac{\partial^{2} r}{\partial z_{n}\partial\zb_{n}} \cr}.
\] 
\end{corollary}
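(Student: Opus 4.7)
The plan is to reduce the corollary to Theorem \ref{thm1} by showing that $\det(B)$ is a compactness multiplier; to establish the latter, I would verify that $B$ is an allowable matrix in the sense of Definition \ref{allowable matrix}, so that Proposition \ref{determinant} applies directly.

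To establish allowability of $B$, fix $u = \sum_{\ell} u_{\ell}\, d\bar z_{\ell}$ in $\text{Dom}(\dbar) \cap \text{Dom}(\dbar^{*}) \subset L_{(0,1)}^{2}(\D)$. Since $B$ agrees with $\VL$ off the $j$-th row, pointwise one has
\[
|Bu|_{E}^{2} = \left|\sum_{\ell=1}^{n} \frac{\partial f}{\partial z_{\ell}} u_{\ell}\right|^{2} + \sum_{k \neq j} |(\VL u)_{k}|^{2} \leq |M_{\partial f}(u)|^{2} + |\VL u|_{E}^{2},
\]
and hence $\|Bu\|^{2} \leq \|M_{\partial f}(u)\|^{2} + \|\VL u\|^{2}$. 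The first summand is controlled by $\varepsilon(\|\dbar u\|^{2} + \|\dbar^{*} u\|^{2}) + C_{\varepsilon,\partial f}\|u\|_{-1}^{2}$ by Proposition \ref{p2}, since $f$ is a compactness multiplier.

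The main obstacle is controlling the second summand $\|\VL u\|^{2}$. Because $\VL$ is Hermitian positive semi-definite with $\lambda_{\max}(\VL(\cdot))$ uniformly bounded on $\Dc$, the pointwise estimate
\[
|\VL u|_{E}^{2} = (\VL^{2} u, u)_{E} \leq \lambda_{\max}(\VL)\,(\VL u, u)_{E}
\]
gives $\|\VL u\|^{2} \lesssim \int_{\D} (\VL u, u)_{E}$. This is precisely the quantity estimated by the Kohn--Morrey type argument inside the proof of Theorem \ref{det-compute}, which yields $\int_{\D} (\VL u, u)_{E} \lesssim \varepsilon(\|\dbar u\|^{2} + \|\dbar^{*} u\|^{2}) + C_{\varepsilon}\|u\|_{-1}^{2}$, closing the estimate on $\|\VL u\|^{2}$.

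Combining the two bounds shows $B$ is an allowable matrix. Noting that the argument for Proposition \ref{determinant} extends routinely to matrices with merely continuous entries (the entries of $B$ lie in $C^{1}(\Dc)$ since $r \in C^{\infty}$ and $f \in C^{2}$), we conclude that $\det(B)$ is a compactness multiplier, and Theorem \ref{thm1} finishes the argument.
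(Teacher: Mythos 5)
Your proof is correct and follows essentially the same route as the paper: show $B$ is allowable, invoke Proposition \ref{determinant} to get that $\det(B)$ is a compactness multiplier, and finish with Theorem \ref{thm1}. The only difference is that you re-derive the allowability of the Hessian rows directly from the estimate on $\int_{\D}(\VL u,u)_{E}$ (via the eigenvalue bound $(\VL^{2}u,u)_{E}\leq\lambda_{\max}(\VL)(\VL u,u)_{E}$), where the paper simply cites Corollary \ref{allowable Levi}; your added remark about the entries of $B$ being merely $C^{1}$ rather than $C^{\infty}$ is a legitimate and worthwhile clarification.
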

\begin{proof}[Proof of Corollary \ref{det mult}]
$f\in C^{2}(\Dc)$ is a compactness multiplier then by Proposition \ref{p2} we have $\partial f$ is an allowable vector field (for a compactness estimate). We will show in Corollary \ref{allowable Levi} that the complex Hessian matrix is allowable, that is, every row is an allowable vector field. \\

The matrix $B$ is  formed by replacing the entire $j^{\text{ th}}$ row of the complex Hessian matrix (in Theorem \ref{det-compute}) with components of $\partial f$, $\left(\frac{\partial f}{\partial z_1},\frac{\partial f}{\partial z_2},...,\frac{\partial f}{\partial z_n},\right)$. The matrix $B$ is  allowable. Then, by Proposition \ref{determinant} the determinant of the matrix $B$ is a compactness multiplier. Finally, by using Theorem \ref{thm1} we conclude that $H_{\det(B)}$ is a compact Hankel operator.

\end{proof}

Let $f_1,f_2,...,f_n\in C^{2}(\Dc)$ and 
\[F:=\bordermatrix{\cr 
\cr           & \frac{\partial f_1}{\partial z_{1}}  & \frac{\partial f_1}{\partial z_{2}}  & ... &\frac{\partial f_1}{\partial z_{n}}  
\cr
\cr           & \frac{\partial f_2}{\partial z_{1}}  & \frac{\partial f_2}{\partial z_{2}}  & ... &\frac{\partial f_2}{\partial z_{n}}
\cr
\cr          & \vdots  & \vdots & \vdots&\vdots
\cr
\cr           & \frac{\partial f_n}{\partial z_{1}}  & \frac{\partial f_n}{\partial z_{2}}  & ... &\frac{\partial f_n}{\partial z_{n}} \cr}.
\] 

\begin{corollary}\label{det mult 2}
Let $\D$ be a smooth bounded pseudoconvex domain in $\C^n$.
If $f_1,f_2,...,f_n\in C^{2}(\Dc)$ are compactness multipliers then 
$H_{\det(F)}$ is a compact Hankel operator on $A^2(\D)$.
\end{corollary}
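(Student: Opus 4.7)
The plan is to chain together three results that have already been set up in the paper: Proposition \ref{p2}, Proposition \ref{determinant}, and Theorem \ref{thm1}. The corollary is really the end of an assembly line: compactness multipliers $\Rightarrow$ allowable vector fields (rows) $\Rightarrow$ allowable matrix $\Rightarrow$ determinant is a compactness multiplier $\Rightarrow$ compact Hankel operator. None of these links requires new ideas, so the proof should be short.

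First, I would invoke Proposition \ref{p2}: because each $f_j\in C^2(\Dc)$ is a compactness multiplier, the $(1,0)$ vector field
\[
\partial f_j \;=\; \sum_{k=1}^{n}\frac{\partial f_j}{\partial z_k}\,\frac{\partial}{\partial z_k}
\]
is allowable, meaning that for every $\varepsilon>0$ there exists $C_{\varepsilon,\partial f_j}>0$ such that
\[
\Bigl\|\sum_{k=1}^{n}\tfrac{\partial f_j}{\partial z_k}u_k\Bigr\|^{2}
\leq \varepsilon\bigl(\|\dbar u\|^{2}+\|\dbar^{*}u\|^{2}\bigr)+C_{\varepsilon,\partial f_j}\|u\|_{-1}^{2}
\]
for all $u=\sum_k u_k\,d\zb_k\in\mathrm{Dom}(\dbar)\cap\mathrm{Dom}(\dbar^{*})\subset L^{2}_{(0,1)}(\D)$.

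Next, I would observe that, by construction, the $j$-th row of the matrix $F$ consists precisely of the components $\left(\frac{\partial f_j}{\partial z_1},\ldots,\frac{\partial f_j}{\partial z_n}\right)$ of the allowable vector field $\partial f_j$. As noted in the remark immediately following Definition \ref{allowable matrix}, a matrix whose every row is an allowable row is an allowable matrix; taking $\varepsilon/n$ in place of $\varepsilon$ in the above estimate for each $j$ and summing over $j=1,\ldots,n$ yields the required bound in \eqref{matrix6}. Hence $F$ is an allowable matrix.

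Finally, I would apply Proposition \ref{determinant} to conclude that $\det(F)\in C^{1}(\Dc)$ is a compactness multiplier on $\D$, and then invoke Theorem \ref{thm1} to obtain that $H_{\det(F)}$ is a compact Hankel operator on $A^{2}(\D)$. The main conceptual obstacle has already been handled upstream: it is Proposition \ref{p2}, which promotes a compactness multiplier to an allowable vector field via integration by parts and the small/large constant trick. Once that fact is available, the present corollary is a one-line diagram chase, so I do not anticipate any genuine difficulty; the only bookkeeping point is to make sure the $\varepsilon$ in the definition of allowability is distributed correctly among the $n$ rows when verifying that $F$ is allowable.
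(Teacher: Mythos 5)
Your proof is correct and follows exactly the route the paper takes: the paper's proof of this corollary simply refers back to Corollary \ref{det mult}, whose argument is the same chain (Proposition \ref{p2} makes each row $\partial f_j$ allowable, hence $F$ is an allowable matrix, Proposition \ref{determinant} makes $\det(F)$ a compactness multiplier, and Theorem \ref{thm1} gives compactness of $H_{\det(F)}$). Your extra remark about distributing $\varepsilon/n$ over the rows is a harmless bookkeeping detail the paper leaves implicit.
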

\begin{proof}
The proof is the same as in Corollary \ref{det mult}.

\end{proof}

\begin{remark}
In Corollary \ref{det mult 2} we can replace one of the compactness multipliers in the hypothesis with the resulting compactness multiplier $\det(F)$ and then apply Corollary \ref{det mult 2} to get another compact Hankel operator.
\end{remark}

Corollary \ref{det mult} presents a connection between two compact Hankel operators with two different symbols: one of the symbols is just making the operator compact and the other symbol involves the characteristics of the domain itself (the second symbol partially involves the complex Hessian matrix of the domain). Furthermore, Corollary \ref{det mult 2} presents a connection between a set of compact Hankel operators with another compact Hankel operator whose symbol is completely developed from the gradients of the symbols of the other compact Hankel operators.

\subsection{Allowable Matrices and their relation to Hankel operators}

In this subsection we further investigate allowable matrices for the compactness estimate. After developing some technical work, we formulate Corollary \ref{allowable Levi} and Theorem \ref{general allowable}. 

\begin{proposition}\label{sym}
If $A\in \R^{n\times n}$ is a symmetric positive semi-definite matrix then there exist a unique matrix $X\in\R^{n\times n}$ satisfying the equation $X^{2}=A$. Moreover, $X$ is also symmetric positive semi-definite.
\end{proposition}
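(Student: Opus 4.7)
My plan is to invoke the spectral theorem twice: once to construct $X$ and once to establish uniqueness. For existence, I will diagonalize $A$ as $A = QDQ^{T}$ with $Q$ orthogonal and $D = \mathrm{diag}(\lambda_1, \ldots, \lambda_n)$, where each $\lambda_i \geq 0$ since $A$ is positive semi-definite. I will then set $X := Q D^{1/2} Q^{T}$ with $D^{1/2} := \mathrm{diag}(\sqrt{\lambda_1}, \ldots, \sqrt{\lambda_n})$. A direct multiplication using $Q^T Q = I$ gives $X^2 = Q D Q^T = A$, and $X^T = X$ is immediate. Positive semi-definiteness will follow from $v^T X v = \sum_i \sqrt{\lambda_i}\, (Q^T v)_i^2 \geq 0$ for every $v \in \R^n$.

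For uniqueness within the class of symmetric positive semi-definite matrices, suppose $Y$ is any such matrix with $Y^2 = A$. The crucial observation I will use is that $Y$ automatically commutes with $A$: $YA = Y \cdot Y^2 = Y^3 = Y^2 \cdot Y = AY$. Since $Y$ is symmetric and commutes with $A$, it preserves every eigenspace $V_\lambda$ of $A$. Restricting to $V_\lambda$, the operator $Y|_{V_\lambda}$ is symmetric positive semi-definite and satisfies $(Y|_{V_\lambda})^2 = \lambda\, I_{V_\lambda}$. Diagonalizing this restriction by the spectral theorem, its eigenvalues are non-negative real numbers squaring to $\lambda$, hence all equal $\sqrt{\lambda}$. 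Consequently $Y|_{V_\lambda} = \sqrt{\lambda}\, I_{V_\lambda}$, which coincides with $X|_{V_\lambda}$, and therefore $Y = X$.

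The main obstacle I anticipate is handling the case of repeated eigenvalues cleanly during the uniqueness step: one cannot simply compare eigenbases of $X$ and $Y$ directly, because a priori their eigenspaces need not align. The identity $YA = AY$ circumvents this difficulty by forcing $Y$ to be block-diagonal with respect to the eigenspace decomposition of $A$, which is what reduces the problem to the easy one-eigenvalue-at-a-time analysis above.
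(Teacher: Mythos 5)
Your argument is correct and complete. For the record, the paper does not actually prove this proposition: it simply cites \cite[proof of Theorem 7.2.6]{MatrixAnalysis}, so any self-contained argument is already "different" from what is on the page. What you give is essentially the standard spectral-theorem proof: existence by conjugating the entrywise square root of the diagonalization, and uniqueness by the observation that any symmetric positive semi-definite $Y$ with $Y^{2}=A$ commutes with $A$ (via $YA=Y^{3}=AY$), hence preserves each eigenspace $V_{\lambda}$, where it must act as $\sqrt{\lambda}\,I$. This is a clean way around the repeated-eigenvalue issue you flag, and it is at least as transparent as the Horn--Johnson proof (which instead exhibits the root as a polynomial in $A$, a slightly stronger conclusion since it gives commutation with everything that commutes with $A$ for free). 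One point worth making explicit, which you handle correctly but the proposition's wording does not: uniqueness can only hold \emph{within the class of symmetric positive semi-definite matrices} (already for $A=I$ both $I$ and $-I$ are square roots), and your proof correctly establishes uniqueness in exactly that class, which is what the cited theorem asserts and what the paper uses in Lemma \ref{lemma1}.
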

\begin{proof}
See \cite[proof of Theorem 7.2.6.]{MatrixAnalysis}.

\end{proof}

\begin{lemma}\label{lemma1}
Let $B(z)$ and $D(z)$ be two symmetric positive semi-definite matrices with entries continuous on $\Dc$. 
If for all $z\in \Dc$ and $\xi\in\C^{n}$
\begin{align*}
0\leq \left(B(z)\cdot\xi\ ,\ \xi\right)\leq \left(D(z)\cdot\xi\ ,\ \xi\right)
\end{align*}
and for all $\varepsilon>0$ there is $C_{\varepsilon}>0$ such that 
\begin{align}\label{fake comp est}
\left\langle D(z)u,u \right\rangle_{L^{2}}\leq \varepsilon ( \norm{\overline{\partial}u}^{2}+\norm{\overline{\partial}^{*}u}^{2})+ C_{\varepsilon}\norm{u}_{-1}^{2}.
\end{align}
then $B(z)$ is an allowable matrix.
\end{lemma}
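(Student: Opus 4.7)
The plan is to reduce the allowable-matrix estimate for $B$ to the assumed estimate \eqref{fake comp est} on $D$ by majorizing quadratic forms pointwise. The argument has three steps: first, dominate $\abs{B(z)u(z)}^{2}$ pointwise by the real quadratic form $\langle B(z)u(z),u(z)\rangle$ up to a bounded multiplicative factor; second, pass from $B$ to $D$ using the hypothesis $(B(z)\xi,\xi)\leq (D(z)\xi,\xi)$; and third, invoke \eqref{fake comp est}.

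For the first step I would diagonalize the Hermitian positive semi-definite matrix $B(z)$ at each point $z\in\Dc$ (equivalently, apply Proposition \ref{sym} to write $B(z)=X(z)^{2}$ with $X(z)$ positive semi-definite). Either way, one obtains the operator inequality
\[
B(z)^{2}\ \leq\ \lambda_{\max}(B(z))\cdot B(z),
\]
because $\lambda_{i}^{2}\leq \lambda_{\max}\,\lambda_{i}$ for every eigenvalue $\lambda_{i}\geq 0$ of $B(z)$. Applied to $u(z)\in\C^{n}$ this yields the pointwise bound
\[
\abs{B(z)u(z)}^{2}\ =\ \langle B(z)^{2}u(z),u(z)\rangle\ \leq\ \lambda_{\max}(B(z))\,\langle B(z)u(z),u(z)\rangle.
\]
Since the entries of $B$ are continuous on the compact set $\Dc$ and the eigenvalues of a Hermitian matrix depend continuously on its entries, the quantity $M:=\max_{z\in\Dc}\lambda_{\max}(B(z))$ is finite.

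Integrating the pointwise bound over $\D$ and applying the hypothesis $(B(z)\xi,\xi)\leq (D(z)\xi,\xi)$ at each point with $\xi=u(z)$ gives
\[
\norm{B(z)u}^{2}\ \leq\ M\int_{\D}\langle B(z)u,u\rangle\,dV\ \leq\ M\int_{\D}\langle D(z)u,u\rangle\,dV\ =\ M\,\langle D(z)u,u\rangle_{L^{2}}.
\]
Now \eqref{fake comp est}, applied with $\varepsilon$ replaced by $\varepsilon/M$, bounds the right-hand side by $\varepsilon(\norm{\dbar u}^{2}+\norm{\dbar^{*}u}^{2})+MC_{\varepsilon/M}\norm{u}_{-1}^{2}$. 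After relabeling the constant, this is precisely the allowable-matrix estimate of Definition \ref{allowable matrix} for $B(z)$.

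I do not expect a substantive obstacle here: the argument combines an elementary spectral inequality with integration and uses only the hypotheses together with the compactness of $\Dc$. The only sanity check is that $\lambda_{\max}(B(\cdot))$ is uniformly bounded on $\Dc$, which is immediate from continuity of the entries of $B$; everything else is formal substitution.
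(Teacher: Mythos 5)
Your proof is correct, but it takes a genuinely different route from the paper's. The paper invokes Proposition \ref{sym} to write $B=B^{1/2}B^{1/2}$, first derives a compactness estimate for $\norm{B^{1/2}u}^{2}=\langle Bu,u\rangle_{L^{2}}\leq\langle Du,u\rangle_{L^{2}}$, and then controls $\norm{Bu}_{L^{2}}^{2}=\bigl(B^{1/2}u,\overline{(B^{1/2})}^{t}Bu\bigr)_{L^{2}}$ by Cauchy--Schwarz followed by a small-constant/large-constant absorption, in which the term $\norm{\overline{(B^{1/2})}^{t}Bu}^{2}\lesssim\norm{u}^{2}$ is swallowed using the boundedness of the entries and (implicitly) the basic estimate $\norm{u}^{2}\lesssim\norm{\dbar u}^{2}+\norm{\dbar^{*}u}^{2}$ on bounded pseudoconvex domains. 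You instead use the purely pointwise spectral inequality $B(z)^{2}\leq\lambda_{\max}(B(z))\,B(z)\leq M\,B(z)$, integrate, and feed the result directly into \eqref{fake comp est} with $\varepsilon$ replaced by $\varepsilon/M$. Your version is shorter and more self-contained: it needs neither the existence of the matrix square root nor the absorption step nor the a priori basic estimate, only the compactness of $\Dc$ and continuity of the entries to bound $\lambda_{\max}$ uniformly. The paper's route is closer in spirit to the multiplier-ideal manipulations used elsewhere in the paper (and in the subelliptic multiplier literature), and the intermediate estimate for $B^{1/2}u$ is of independent use there; but as a proof of this lemma your argument is at least as clean and fully rigorous (the only point to note is that ``symmetric positive semi-definite'' must be read as Hermitian so that $\abs{B(z)u(z)}^{2}=\langle B(z)^{2}u(z),u(z)\rangle$, which is how the paper uses it as well).
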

\begin{proof}
The square root of the matrix $B(z)$ is unique by Proposition \ref{sym} and has a compactness estimate on $\D$:

\[\norm{B^{1/2}u}^{2}=\int_{\Omega}(Bu)\overline{u}=\left\langle B u,u\right\rangle_{L^2}\leq \left\langle D(z)u,u \right\rangle_{L^{2}}.\] 
Then by the hypothesis of the theorem we have the estimate for all $\varepsilon>0$ there is $C_{\varepsilon}>0$ such that 
\begin{align}\label{sqrt est1}
\norm{B^{1/2}u}_{L^{2}}^{2}\leq \varepsilon ( \norm{\overline{\partial}u}^{2}+\norm{\overline{\partial}^{*}u}^{2})+ C_{\varepsilon}\norm{u}_{-1}^{2}.
\end{align}

It follows that,
\begin{align*}
\norm{B u}_{L^{2}}^{2}&=\left(B u,B u\right)_{L^{2}}\\
\nonumber &=\left(B^{1/2}B^{1/2}u,B u\right)_{L^{2}}\\
\nonumber &=\left(B^{1/2}u,\overline{(B^{1/2})}^{t}B u\right)_{L^{2}}\\
\nonumber &\leq \norm{B^{1/2}u}_{L^{2}}\ \norm{\overline{(B^{1/2})}^{t}B u}_{L^{2}}\\
\nonumber &\leq 1/(2a)\norm{B^{1/2}u}_{L^{2}}^{2} + (a/2)\norm{\overline{(B^{1/2})}^{t}B u}_{L^{2}}^{2}\\
\nonumber &\leq \varepsilon ( \norm{\overline{\partial}u}^{2}+\norm{\overline{\partial}^{*}u}^{2})+ C_{\varepsilon}\norm{u}_{-1}^{2}.
\end{align*}
In the first inequality, we use the Cauchy-Schwarz inequality. In the second one, we use  the large constant-small constant inequality. 
As for the last inequality, the second term on the right is under control because the entries of the matrix $B(z)$ are smooth on $\overline{\D}$ and the norm of the matrix $B(z)$ is bounded on $\overline{\D}$. Then the small constant represented as $(a/2)$ will take care of the constant coming from $\overline{(B^{1/2}(z))}^{t}B(z)$ in front of the norm and the term will be estimated with $(a/2) ( \norm{\overline{\partial}u}^{2}+\norm{\overline{\partial}^{*}u}^{2})$. As for the first term $1/(2a)\norm{B^{1/2}u}_{L^{2}}^{2}$, we use the above estimate \eqref{sqrt est1}.

\end{proof}

As an application of Lemma \ref{lemma1} one can see that on a smooth bounded pseudoconvex domain $\Omega$ in $\C^{n}$ the complex Hessian matrix $\VL$ (see eq. \eqref{Levi form}) is an allowable matrix.

\begin{corollary}\label{allowable Levi}
Let $\Omega$ be a smooth bounded pseudoconvex domain in $\C^{n}$ that admits a smooth plurisubharmonic defining function. Then the complex Hessian matrix $\VL$ is an allowable matrix. 
\end{corollary}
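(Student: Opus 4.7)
The plan is to obtain Corollary \ref{allowable Levi} as an immediate consequence of the quadratic-form estimate established inside the proof of Theorem \ref{det-compute}, fed into Lemma \ref{lemma1} with $B=D=\VL$. No new analytic input should be required; the point of the corollary is simply to repackage what was already proved in a slightly stronger form (the full allowable matrix estimate for $\VL u$, not merely the quadratic form $\langle \VL u,u\rangle$).

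First, I would record that because $r$ is plurisubharmonic on $\Dc$, the complex Hessian $\VL(z)$ is Hermitian positive semi-definite at every point of $\Dc$, with entries in $C^\infty(\Dc)$. In particular, for every $z\in\Dc$ and every $\xi\in\C^n$ we have the trivial inequality
\[
0\leq (\VL(z)\xi,\xi)\leq (\VL(z)\xi,\xi),
\]
so the pointwise comparison hypothesis of Lemma \ref{lemma1} is satisfied with $B=D=\VL$.

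Second, I would invoke the estimate \eqref{eq:no23a} already established in the proof of Theorem \ref{det-compute}: for every $\varepsilon>0$ there exists $C_\varepsilon>0$ such that
\[
\langle \VL u,u\rangle_{L^2}=\int_\Omega\sum_{j,k=1}^n\frac{\partial^2 r(z)}{\partial z_j\partial\zb_k}u_j(z)\overline{u_k(z)}\,dV(z)\lesssim \varepsilon\bigl(\|\dbar u\|^2+\|\dbar^{*}u\|^2\bigr)+C_\varepsilon\|u\|_{-1}^2
\]
for all $u\in C^\infty_{(0,1)}(\D)\cap\text{Dom}(\dbar^*)$. This is exactly the hypothesis \eqref{fake comp est} of Lemma \ref{lemma1} with $D=\VL$. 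Applying Lemma \ref{lemma1} with $B=D=\VL$ therefore yields
\[
\|\VL u\|^2\leq \varepsilon\bigl(\|\dbar u\|^2+\|\dbar^{*}u\|^2\bigr)+C_\varepsilon\|u\|_{-1}^2,
\]
first for smooth forms in $\text{Dom}(\dbar^*)$ and then, by the Density Lemma cited in the proof of Proposition \ref{p2}, for every $u\in\text{Dom}(\dbar)\cap\text{Dom}(\dbar^*)\subset L^2_{(0,1)}(\D)$, which is precisely the definition of an allowable matrix.

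There is essentially no obstacle, but the only point of care is terminological: Lemma \ref{lemma1} is stated for ``symmetric'' positive semi-definite matrices while $\VL$ is Hermitian. Both Proposition \ref{sym} (existence and uniqueness of a positive semi-definite square root) and the Cauchy--Schwarz / small-constant--large-constant chain in the proof of Lemma \ref{lemma1} carry over verbatim in the Hermitian setting, so the argument applies to $\VL$ without modification.
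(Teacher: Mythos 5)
Your proposal is correct and follows the paper's proof exactly: the paper also obtains the corollary by applying Lemma \ref{lemma1} with $B=D=\VL$, citing the quadratic-form estimate \eqref{eq:no23a} from the proof of Theorem \ref{det-compute} as the hypothesis \eqref{fake comp est}. Your extra remarks on the Hermitian versus symmetric terminology and the density argument are sensible clarifications but not a departure from the paper's route.
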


\begin{proof}
Consider $B(z)=D(z)=\VL$ (in Lemma \ref{lemma1}) to get the result. Note that the estimate \eqref{fake comp est} in the hypothesis of Lemma \ref{lemma1} for $\VL$ is showed in the proof of Theorem \ref{det-compute}.
\end{proof}

\begin{remark}
Combining Corollary \ref{allowable Levi} and Proposition \ref{determinant} it follows that $\det(\VL)$ is a compactness multiplier and thus the result in Theorem \ref{det-compute} follows. Note that in Theorem \ref{det-compute} we do not use that complex Hessian is an allowable matrix, but we use an inequality relation between a matrix and its determinant. 
\end{remark}

A more general application of Lemma \ref{lemma1} is the  following. 

\begingroup
\def\thetheorem{\ref{general allowable}}
\begin{theorem}
Let $\D$ be as in Theorem \ref{det-compute}. If $A(z)$ is a positive semidefinite self-conjugate matrix (of entries continuous functions on $\Dc$) such that
for all $z\in \Dc$ and $\xi\in\C^{n}$
\begin{align}
0\leq \left(A^{m}(z)\cdot\xi\ ,\ \xi\right)\leq \left(\VL(z)\cdot\xi\ ,\ \xi\right)\ \ \text{for some}\ m\in \Z^{+}
\end{align}
then the Hankel operator $H_{\phi}$ is compact on $A^2(\D)$, where $\phi(z)=\det(A(z))$. 
\end{theorem}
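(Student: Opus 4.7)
By Theorem \ref{thm1}, it suffices to prove that $\phi = \det(A)$ is a compactness multiplier on $\D$. The plan is to carry this out in three steps: first establish that the matrix power $A^m$ is an allowable matrix; next deduce that $(\det A)^m = \det(A^m)$ is a compactness multiplier; and finally conclude that $\det A$ itself is a compactness multiplier by exploiting the radical nature of the multiplier ideal.

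For the first step, I will apply Lemma \ref{lemma1} with $B(z) := A^m(z)$ and $D(z) := \VL(z)$. Since $A$ is positive semidefinite and self-conjugate with continuous entries, the same holds for $A^m$; and $\VL$ is positive semidefinite by the plurisubharmonicity of the defining function $r$. The pointwise domination $0 \leq (A^m\xi, \xi) \leq (\VL\xi, \xi)$ on $\Dc \times \C^n$ is exactly the hypothesis of the theorem. The only remaining requirement of Lemma \ref{lemma1}, namely that $\VL$ itself satisfies estimate \eqref{fake comp est}, is precisely the content of \eqref{eq:no23a}, which is proved inside Theorem \ref{det-compute}. Lemma \ref{lemma1} then gives that $A^m$ is an allowable matrix.

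Applying Proposition \ref{determinant} to the allowable matrix $A^m$ then yields that $\det(A^m) = (\det A)^m$ is a compactness multiplier. Proposition \ref{determinant} is stated under the hypothesis of smooth entries, but its proof rests only on the pointwise linear-algebraic estimate $\abs{\det B}^2 \abs{v}^2 \lesssim \norm{Bv}^2$, which holds for any Hermitian positive semidefinite $B$ with entries continuous on the compact $\Dc$; so the same argument goes through in the merely continuous case.

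For the last step, recall from the discussion at the start of Section~3 that the ideal $J$ of compactness multipliers in $C(\Dc)$ equals $\{f \in C(\Dc) : f|_K \equiv 0\}$, where $K$ is the common zero set of all compactness multipliers. Since $(\det A(z))^m = 0$ if and only if $\det A(z) = 0$, the vanishing of $(\det A)^m$ on $K$ forces the vanishing of $\det A$ on $K$, so $\det A \in J$. Invoking Theorem \ref{thm1} one last time gives the desired compactness of $H_{\det A}$ on $A^2(\D)$. The most delicate piece of the argument is precisely this last transition: there is no apparent pointwise or quadratic manipulation of the compactness-multiplier estimate that would pass from $(\det A)^m$ down to $\det A$ directly, and one really needs the global characterization of $J$ as a radical ideal to close the loop.
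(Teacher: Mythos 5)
Your argument is correct, but its second half takes a genuinely different route from the paper's. Both proofs open the same way: Lemma \ref{lemma1} with $B=A^m$ and $D=\VL$, together with the estimate \eqref{eq:no23a} established inside the proof of Theorem \ref{det-compute}, shows that $A^m$ is an allowable matrix. From there the paper does \emph{not} pass through $\det(A^m)$; it upgrades allowability of $A^m$ to allowability of $A$ itself via the interpolation inequalities $\norm{A^{(r+l)/2}u}^{2}\leq\norm{A^{r}u}\,\norm{A^{l}u}$ for the self-adjoint matrix $A$, and only then applies Proposition \ref{determinant} to $A$ followed by Theorem \ref{thm1}. You instead apply Proposition \ref{determinant} directly to $A^m$ to get that $(\det A)^m=\det(A^m)$ is a compactness multiplier, and then descend to $\det A$ by invoking the characterization $J=\{f\in C(\Dc)\;|\;f|_{K}\equiv 0\}$ quoted at the start of Section 3: since $(\det A)^m$ and $\det A$ have the same zero set, $\det A$ vanishes on $K$ and hence lies in $J$. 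Your route is shorter and sidesteps the interpolation step, which is the delicate part of the paper's argument (as displayed in \eqref{estReviewer} the small and large constants sit on the terms opposite to what is needed, and the computation must be rearranged to read $\norm{Au}^{2}\leq\varepsilon\norm{u}^{2}+C_{\varepsilon}\norm{A^{m}u}^{2}$ before it combines with the basic estimate and the allowability of $A^m$). The trade-off is that you lean on the nontrivial structural fact from \cite{CelikStraube09} that the multiplier ideal is the full vanishing ideal of $K$ — a black box the paper uses elsewhere but avoids here — whereas the paper's longer computation yields the stronger intermediate conclusion that $A$ itself is allowable. Your observation that Proposition \ref{determinant} survives with merely continuous entries is well taken and is in any case implicitly needed by the paper's own proof, which applies that proposition to the continuous matrix $A$.
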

\addtocounter{theorem}{-1}
\endgroup

\begin{proof} If $A(z)$ is allowable, by Proposition \ref{determinant} we see that $\det(A(z))$ is a compactness multiplier and then by Theorem \ref{thm1} it follows that $H_{\det\left(A(z)\right)}$ is a compact operator on $A^2(\D)$. 
Therefore, it suffices to show that for any cases of $m\in \Z^{+}$ the matrix $A(z)$ is allowable.\\

If the hypothesis of the theorem is satisfied for $m=1$,  we have 
\begin{align}\label{ineq1}
0\leq \left\langle A(z) u,u\right\rangle_{L^{2}}&\leq \left\langle \VL(z) u, u\right\rangle_{L^{2}}
\end{align}
$\forall u\in \cC_{(0,1)}^{\infty}(\D)\cap \text{dom}(\dbar^*)$. It follows from Lemma \ref{lemma1} that the matrix $A(z)$ is allowable. Note that the estimate \eqref{fake comp est} in the hypothesis of Lemma \ref{lemma1} for $\VL$ is showed in the proof of Theorem \ref{det-compute}.\\

With the hypothesis $0\leq (A^{m}(z) u,u)_{L^{2}}\leq (\VL(z) u, u)_{L^{2}}$ for some $m\in \Z^{+}$ and $\forall u\in \cC_{(0,1)}^{\infty}(\D)\cap \text{dom}(\dbar^*)$, we show that $A^m$ is an allowable matrix by using the same approach as for $m=1$:\\

To complete the proof it is enough to show that for given $m\in \Z^{+}$ and $\varepsilon>0$, there exists $C_{\varepsilon}>0$ such that 
\begin{align}\label{estReviewer}
\norm{Au}_{L^{2}}^{2}&\leq \varepsilon\norm{A^{m}u}_{L^{2}}^{2}+C_{\varepsilon,m}\norm{u}_{L^{2}}^{2}\\
\nonumber &\forall u\in \cC_{(0,1)}^{\infty}(\D)\cap \text{dom}(\dbar^*)
\end{align}
If $m=2$ then this simply follows from the small constant-large constant inequality. Indeed, 
\begin{align*}
\norm{Au}_{L^{2}}^{2}&=\left(Au,Au \right)=\left(A^{*}Au,u \right)_{L^{2}}\\
\nonumber {}_{{}_{(A^{*}=A)}}&=\left(A^{2}u,u \right)_{L^{2}}\\
\nonumber &\leq \norm{A^{2}u}_{L^{2}}\ \norm{u}_{L^{2}}\\
\nonumber &\leq \frac{1}{2a}\norm{A^{2}u}_{L^{2}}^{2}+\frac{a}{2}\norm{u}_{L^{2}}^{2}
\end{align*}
For case $m=3$ we are going to use the following interpolation inequality; $$\norm{A^{2}u}^{2}=(A^{2}u,A^{2}u)=(A^{3}u,Au)\leq\norm{A^{3}u}\ \norm{Au}$$
which, actually can be generalized as follows; 
\begin{align}\label{gen interpolation}
\norm{A^{\frac{r+l}{2}}u}^{2}\leq\norm{A^{r}u}\ \norm{A^{l}u}
\end{align}
so we have 
\begin{align}
\norm{Au}^{2}&\leq\frac{1}{2a}\norm{A^{2}u}^{2}+ \frac{a}{2}\norm{u}^{2}\\
\nonumber &\leq \frac{1}{2a}\norm{A^{3}u}\norm{Au}+\frac{a}{2}\norm{u}^{2} ~\text{ (one more lc-sc inequality)}\\
\nonumber &\leq \frac{1}{2a}\left(\frac{1}{4a}\right)\norm{A^{3}u}^2+\frac{1}{2a}a\norm{Au}^{2}+\frac{a}{2}\norm{u}^{2}.
\end{align}
We subtract the middle term from both sides, and multiply both sides by 2 to get
\[  \norm{Au}^{2}\leq \frac{1}{4a^2}\norm{A^{3}u}^{2}+a\norm{u}^{2}.\]
When we choose $a$ sufficiently large we we get \eqref{estReviewer}.  The general case $m>3$ is virtually the same where the generalized interpolation inequality \eqref{gen interpolation} is used.

\end{proof}


\section{Examples and Remarks}

We start with the classical example of a rounded polydisc.
\begin{example}\label{ex1}
Let 
\[\lambda(t)=0\ \text{if}\ t\leq 0\ \&\ \lambda(t)=e^{-1/t}\ \text{if}\ t>0\]
$\lambda$ is a convex function on $(-\infty,1/2)$.
Then, consider the following domain
\[\D:=\left\{(z_1,z_2)\ |\ \rho(z_1,z_2)=\lambda\left(\frac{1}{2}\left(|z_1|^2-\frac{1}{4}\right)\right)+\lambda\left(\frac{1}{2}\left(|z_2|^2-\frac{1}{4}\right)\right)-e^{-8/3}<0\right\}.\]
\begin{center} 
\includegraphics[width=0.35\textwidth]{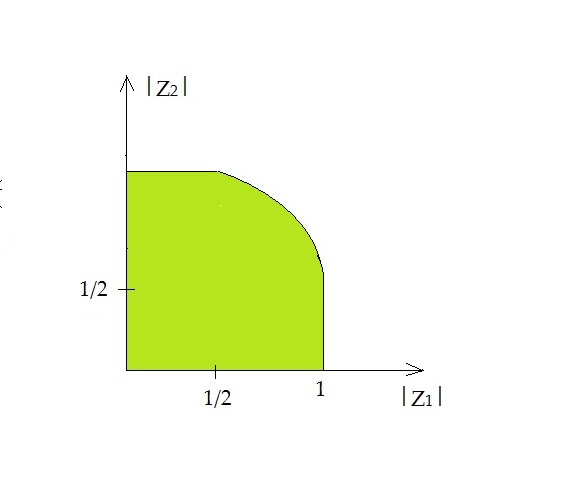} 
\end{center}
$\D$ is a smooth bounded pseudoconvex domain. Let
\[L_1:=\{(z_1,z_2)\ :\ 0\leq|z_2|\leq 1/2\ \text{ and }\ |z_1|=1 \}\] and
\[L_2:=\{(z_1,z_2)\ :\ 0\leq|z_1|\leq 1/2\ \text{ and }\ |z_2|=1 \}\]
then \[L:=L_1\cup L_2\subset b\D\]
is (Levi flat) foliated with analytic discs.\\
Note that the defining function of $\D$, $\rho(z_1,z_2)$ is a compactness multiplier: $\rho(z_1,z_2)$ vanishes on $L$.  Theorem \ref{thm1} implies that $H_{\rho}$ is compact on $A^2(\D)$. 

\begin{align*}
\VL:&=\left[\frac{\partial^{2}\rho}{\partial z_i \partial \zb_{j}}\right]_{1\leq i,j\leq 2}\\
&=\bordermatrix{\cr             
\cr           & \frac{(-2|z_1|^4+4|z_1|^2+1/8)}{\left(|z_1|^2-1/4\right)^4}\lambda\left(\frac{1}{2}(|z_1|^2-1/4)\right) &  0
\cr & &
\cr          & 0& \frac{(-2|z_2|^4+4|z_2|^2+1/8)}{\left(|z_2|^2-1/4\right)^4}\lambda\left(\frac{1}{2}(|z_2|^2-1/4)\right)  \cr}
\end{align*}
is the complex Hessian matrix and by Theorem \ref{det-compute} the matrix $\VL$ is allowable. The determinant of $\VL$ is
\begin{align*}
\det(\VL)=k(z_1,z_2)\lambda\left(\frac{1}{2}(|z_1|^2-1/4)\right)\lambda\left(\frac{1}{2}(|z_2|^2-1/4)\right).
\end{align*}
where $k(z_1,z_2)=\frac{4(|z_1|^2-1)^2(|z_2|^2-1)^2-17/8(|z_1|^2-1)^2-17/8(|z_2|^2-1)^2+17^2/18^2}{\left(|z_1|^2-1/4\right)^4 \left(|z_2|^2-1/4\right)^4}$.\\
\\
$\det(\VL)$ vanishes on the set $L$ and so Proposition \ref{determinant} gives us that $\det(\VL)$ is a compactness multiplier and by Theorem \ref{thm1} it follows that the Hankel operator $H_{\det(\VL)}$ is compact on $A^2(\D)$.

Let's form matrix $B$ by replacing the second row of the complex Hessian matrix $\VL$ with the allowable vector field $\partial\rho$.
\begin{align*}
B:&=
\bordermatrix{\cr
\cr           & 
\frac{\partial^2 \rho}{\partial z_1\partial \zb_1} & \frac{\partial^2 \rho}{\partial z_1\partial \zb_2}
\cr & &
\cr & \frac{\partial \rho}{\partial z_1} & \frac{\partial \rho}{\partial z_2}
\cr}\\
&=
\bordermatrix{\cr
\cr           & \frac{(-2|z_1|^4+4|z_1|^2+1/8)}{\left(|z_1|^2-1/4\right)^4}\lambda\left(\frac{1}{2}(|z_1|^2-1/4)\right)  &  0
\cr & &
\cr &\frac{2\zb_1}{(|z_1|^2-1/4)^2}\lambda\left(\frac{1}{2}\left(|z_1|^2-\frac{1}{4}\right)\right)& \frac{2\zb_2}{(|z_2|^2-1/4)^2}\lambda\left(\frac{1}{2}\left(|z_2|^2-\frac{1}{4}\right)\right)
\cr}
\end{align*}

\[\det(B)=\frac{2\zb_2(-2|z_1|^4+4|z_1|^2+1/8)}{\left(|z_1|^2-1/4\right)^4 \left(|z_2|^2-1/4\right)^2}\lambda\left(\frac{1}{2}(|z_1|^2-1/4)\right)\lambda\left(\frac{1}{2}(|z_2|^2-1/4)\right). \]
Corollary \ref{det mult} implies that the Hankel operator $H_{\det(B)}$ is compact on $A^2(\D)$.

Let $f_1(z)=(|z_1|^2-1)(|z_2|^2-1)$ and $f_2(z)=\sin\left((|z_1|^2-1)(|z_2|^2-1)\right)$. $f_1(z)$ and $f_2(z)$ are both vanishing on $L$, so they are compactness multipliers. Then Corollary \ref{allowable Levi} tells us that 

\begin{align*}
F:&=
\bordermatrix{\cr
\cr           & 
\frac{\partial f_1}{\partial z_1} & \frac{\partial f_1}{\partial z_2}
\cr & &
\cr           & 
\frac{\partial f_2}{\partial z_1} & \frac{\partial f_2}{\partial z_2}
\cr}\\
&=
\bordermatrix{\cr
\cr  &  (|z_2|^2-1) \zb_1  & (|z_1|^2-1) \zb_2
\cr  &    &
\cr  &  (|z_2|^2-1) \zb_1 \cos\left((|z_1|^2-1)(|z_2|^2-1)\right)  & (|z_1|^2-1) \zb_2\cos\left((|z_1|^2-1)(|z_2|^2-1)\right) 
\cr},
\end{align*}
 
\[\det(F)= (\zb_1-\zb_2)(|z_1|^2-1)(|z_2|^2-1)\cos\left((|z_1|^2-1)(|z_2|^2-1)\right),\]
and the Hankel operator $H_{\det(F)}$ is compact on $A^2(\D)$. 

On the other hand, to show that the Hankel operators $H_{\det(F)}$ is compact on $A^2(\D)$ one may use the result from \cite{CuckovicSahutoglu09}, which requires showing that for all analytic discs $f:\disc\rightarrow L\subset b\D$ the compositions $\det(F)\circ f(z)$ are holomorphic on $\mathbb{D}$.

\end{example}

\begin{remark}\label{vf remark}
One can find a smooth allowable vector field of type $(1,0)$ such that its integral solution is not a compactness multiplier. For example, in $\C^{2}$, let $\Omega$ be a bounded smooth pseudoconvex domain where the $\overline{\partial}$-Neumann operator not compact, that is, $\D$ be a domain with boundary (Levi flat) foliated with analytic discs. 
Consider, in a \emph{special boundary chart} see \cite[pg.13]{StraubeBook}, the $Y=0\cdot \frac{\partial}{\partial z_1}+1\cdot \frac{\partial}{\partial z_2}$ which is allowable vector field of type $(1,0)$. $Y$ is allowable vector field because $u\in \cC_{(0,1)}^{\infty}(\D)\cap\text{dom}(\dbar^*)$ and so the normal component of $u$, that is $u_2$, vanishes on $L^2$. 
On the other hand, its integral solution $y=c+z_2$, where $c$ is a constant, can not be a compactness multiplier, in general. In Example \ref{ex1}, if $y=c+z_2$ is a compactness multiplier then because $y$ does not vanish on the set $L$, there would exists compactness estimate on $B(p,r)\cap\D$, where $p\in L$ and $B(p,r)$ is a ball. This would contradict \cite[Proposition 9]{FuStraube2001} (see also \cite{SahutogluStraube2006}).

\end{remark}

\begin{example}\label{hankel example}
If a Hankel operator $H_{f}$ is compact on $A^2(\D)$ then $\partial f$ is not necessarily an allowable vector field.
$H_{z_1}\equiv 0$, so is compact on $A^2(\D)$ for any domain $\D\subset\C^n$, but $z_1$ is not a compactness multiplier on the domain defined in Example \ref{ex1}, since $z_1\not=0$ on $L$. Moreover, $\partial z_1=1\cdot\frac{\partial}{\partial z_1}+0\cdot\frac{\partial}{\partial z_2}$ is not an allowable vector field on the domain $\D$ in the same example. 
\end{example}

\begin{remark}\label{matrix remark}
One can find a matrix whose determinant is a compactness multiplier, but the matrix is not an allowable one. From the elementary algebra perspective this is because, the determinant map is a group homomorphism map from the algebra of square matrices (under matrix multiplication) to the algebra of real numbers (under multiplication), but it is not a group isomorphism.
In $\C^{2}$ consider $\Omega$ bounded smooth pseudoconvex domain where the $\overline{\partial}$-Neumann operator is not compact, let $\D$ be a domain with boundary (Levi flat) foliated with analytic discs. Then, in a \emph{special boundary chart}, on the boundary, we will have $u_{2}$, the normal component of the form $u=u_1d\zb_1+u_2d\zb_2$ as $0$. Now consider the matrix 
\[A=\bordermatrix{& & \cr  &1  &0  \cr   &0 &r \cr}.\] 
Then $\det(A)=r$. We know that the defining function $r$ is a compactness multiplier, so does $\det(A)$. 

Assume that $A$ is allowable, that is, each row of the matrix $A$ is allowable row. However, if the first row of the matrix $A$, $(A_{11},A_{12})$ is allowable then we have $\forall \varepsilon>0$ $\exists C_{\varepsilon}>0$ such that  
\[\norm{u_{1}}^2=\norm{u}^2 \leq \varepsilon (\norm{\overline{\partial}u}^{2}+\norm{\overline{\partial}^{*}u}^{2})+ C_{\varepsilon}\norm{u}_{-1}^{2}
\]
$\forall u\in \cC_{(0,1)}^{\infty}(\D)\cap \text{dom}(\dbar^*)$.  
This implies the existence of compactness on the domain $\D$ which contradicts \cite[Proposition 9]{FuStraube2001} (see also \cite{SahutogluStraube2006}). 
Thus, the matrix $A$ is not allowable although its determinant is a compactness multiplier.

\end{remark}

\section{Acknowledgements}
The authors would like to thank the anonymous referee for the useful comments. The authors also would like to thank S{\"o}nmez {\c{S}}ahuto{\u{g}}lu for valuable comments on an earlier version of this manuscript. 



\begin{thebibliography}{Koh79}

\bibitem[Axl86]{Axler1986}
Sheldon Axler, \emph{The {B}ergman space, the {B}loch space, and commutators of
  multiplication operators}, Duke Math. J. \textbf{53} (1986), no.~2, 315--332.

\bibitem[{\c{C}}el08]{CelikPhD08}
Mehmet {\c{C}}elik, \emph{Contributions to the compactness theory of the
  (part)-{N}eumann operator}, ProQuest LLC, Ann Arbor, MI, 2008, Thesis
  (Ph.D.)--Texas A\&M University. \MR{2711959}

\bibitem[CS01]{ChenShawBook}
So-Chin Chen and Mei-Chi Shaw, \emph{Partial differential equations in several
  complex variables}, AMS/IP Studies in Advanced Mathematics, vol.~19, American
  Mathematical Society, Providence, RI, 2001.

\bibitem[{\c{C}}S09a]{CelikStraube09}
Mehmet {\c{C}}elik and Emil~J. Straube, \emph{Observations regarding
  compactness in the {$\overline{\partial}$}-{N}eumann problem}, Complex Var.
  Elliptic Equ. \textbf{54} (2009), no.~3-4, 173--186.

\bibitem[{\v{C}}{\c{S}}09b]{CuckovicSahutoglu09}
{\v{Z}}eljko {\v{C}}u{\v{c}}kovi{\'c} and S{\"o}nmez {\c{S}}ahuto{\u{g}}lu,
  \emph{Compactness of {H}ankel operators and analytic discs in the boundary of
  pseudoconvex domains}, J. Funct. Anal. \textbf{256} (2009), no.~11,
  3730--3742.

\bibitem[{\c{C}}{\c{S}}12]{CelikSahutoglu2012}
Mehmet {\c{C}}elik and S{\"o}nmez {\c{S}}ahuto{\u{g}}lu, \emph{On compactness
  of the {$\overline\partial$}-{N}eumann problem and {H}ankel operators}, Proc.
  Amer. Math. Soc. \textbf{140} (2012), no.~1, 153--159.

\bibitem[{\u{C}}{\c{S}}13]{CuckovicSahutoglu13}
{\u{Z}}eljko {\u{C}}u{\u{c}}kovi{\'c} and S{\"o}nmez {\c{S}}ahuto{\u{g}}lu,
  \emph{Axler-{Z}heng type theorem on a class of domains in {$\Bbb{C}\sp n$}},
  Integral Equations Operator Theory \textbf{77} (2013), no.~3, 397--405.

\bibitem[{\c{C}}{\c{S}}14a]{CelikSahutoglu2014}
Mehmet {\c{C}}elik and S{\"o}nmez {\c{S}}ahuto{\u{g}}lu, \emph{Compactness of
  the {$\overline\partial$}-{N}eumann operator and commutators of the {B}ergman
  projection with continuous functions}, J. Math. Anal. Appl. \textbf{409}
  (2014), no.~1, 393--398.

\bibitem[{\v{C}}{\c{S}}14b]{CuckovicSahutoglu14}
{\v{Z}}eljko {\v{C}}u{\v{c}}kovi{\'c} and S{\"o}nmez {\c{S}}ahuto{\u{g}}lu,
  \emph{Compactness of products of {H}ankel operators on convex {R}einhardt
  domains in {$\Bbb C\sp 2$}}, New York J. Math. \textbf{20} (2014), 627--643.

\bibitem[D'A93]{D'AngeloBook1992}
John~P. D'Angelo, \emph{Several complex variables and the geometry of real
  hypersurfaces}, Studies in Advanced Mathematics, CRC Press, Boca Raton, FL,
  1993.

\bibitem[FS01]{FuStraube2001}
Siqi Fu and Emil~J. Straube, \emph{Compactness in the
  {$\overline\partial$}-{N}eumann problem}, Complex analysis and geometry
  ({C}olumbus, {OH}, 1999), Ohio State Univ. Math. Res. Inst. Publ., vol.~9, de
  Gruyter, Berlin, 2001, pp.~141--160.

\bibitem[Has01]{Haslinger01}
Friedrich Haslinger, \emph{The canonical solution operator to
  {$\overline\partial$} restricted to {B}ergman spaces}, Proc. Amer. Math. Soc.
  \textbf{129} (2001), no.~11, 3321--3329 (electronic).

\bibitem[HJ13]{MatrixAnalysis}
Roger~A. Horn and Charles~R. Johnson, \emph{Matrix analysis}, second ed.,
  Cambridge University Press, Cambridge, 2013.

\bibitem[H{\"o}r65]{Hormander65}
Lars H{\"o}rmander, \emph{{$L\sp{2}$} estimates and existence theorems for the
  {$\overline\partial $}\ operator}, Acta Math. \textbf{113} (1965), 89--152.

\bibitem[Koh79]{Kohn79}
J.~J. Kohn, \emph{Subellipticity of the {$\bar \partial $}-{N}eumann problem on
  pseudo-convex domains: sufficient conditions}, Acta Math. \textbf{142}
  (1979), no.~1-2, 79--122.

\bibitem[{\c{S}}ah12]{Sahutoglu12}
S{\"o}nmez {\c{S}}ahuto{\u{g}}lu, \emph{Localization of compactness of {H}ankel
  operators on pseudoconvex domains}, Illinois J. Math. \textbf{56} (2012),
  no.~3, 795--804.

\bibitem[{\c{S}}S06]{SahutogluStraube2006}
S{\"o}nmez {\c{S}}ahuto{\u{g}}lu and Emil~J. Straube, \emph{Analytic discs,
  plurisubharmonic hulls, and non-compactness of the
  {$\overline\partial$}-{N}eumann operator}, Math. Ann. \textbf{334} (2006),
  no.~4, 809--820.

\bibitem[Str08]{Straube2008}
Emil~J. Straube, \emph{A sufficient condition for global regularity of the
  {$\overline\partial$}-{N}eumann operator}, Adv. Math. \textbf{217} (2008),
  no.~3, 1072--1095.

\bibitem[Str10]{StraubeBook}
\bysame, \emph{Lectures on the $\mathcal{L}^{2}$-{S}obolev theory of the
  $\overline\partial$-{N}eumann problem}, ESI Lectures in Mathematics and
  Physics, vol.~7, European Mathematical Society (EMS), Z\"urich, 2010.

\bibitem[{\c{S}}Z]{ZeytuncuSahutoglu16}
S{\"o}nmez {\c{S}}ahuto{\u{g}}lu and Yunus~E. Zeytuncu, \emph{On compactness of
  {H}ankel and the {$\overline\partial$}-{N}eumann operators on {H}artogs
  domains in $\mathbb{C}^2$}, to appear in J. Geom. Anal.

\end{thebibliography}
\end{document}